\documentclass[a4paper,11pt]{article}

\usepackage[left=3cm, right=3cm]{geometry}
\usepackage[hidelinks]{hyperref}
\usepackage{amsfonts,amssymb,amsmath,amsthm} 
\usepackage[utf8]{inputenc} 
\usepackage[alphabetic, nobysame]{amsrefs}
\usepackage{enumerate}
\usepackage[english]{babel}
\usepackage{authblk}
\usepackage{pstricks,pst-plot,pst-node}
\usepackage[linesnumbered, ruled, vlined]{algorithm2e}
\usepackage{listings}
\usepackage{xcolor}
\usepackage{url}
\usepackage{chngcntr}
\usepackage{graphicx}
\usepackage{filecontents}

\begin{filecontents*}{data.dat}
---
1 750
2 777
3 766
4 774
5 765
6 752
7 758
8 752
9 749
10 751
11 745
12 749
13 744
14 747
15 741
16 735
17 735
18 748
19 754
20 748
21 755
22 758
23 767
24 761
25 755
26 760
27 765
28 761
29 761
30 761
31 776
32 786
33 798
34 813
35 799
36 810
37 802
38 790
39 798
40 792
41 804
42 798
43 808
44 792
45 799
46 806
47 798
48 802
49 796
50 793
51 805
52 800
53 802
54 798
55 809
56 799
57 799
58 806
59 803
60 813
61 823
62 844
63 846
64 844
65 838
66 852
67 843
68 832
69 826
70 826
71 843
72 831
73 829
74 825
75 841
76 825
77 826
78 823
79 814
80 808
81 814
82 819
83 842
84 828
85 826
86 822
87 841
88 830
89 828
90 824
91 840
92 834
93 830
94 824
95 827
96 822
97 841
98 830
99 838
100 832
101 840
102 835
103 837
104 835
105 831
106 826
107 835
108 832
109 828
110 822
111 828
112 824
113 827
114 828
115 836
116 825
117 830
118 825
119 834
120 828
121 831
122 828
123 828
124 828
125 819
126 825
127 816
128 818
129 823
130 814
131 817
132 814
133 805
134 803
135 812
136 824
137 825
138 827
139 821
140 819
141 821
142 822
143 820
144 838
145 831
146 837
147 856
148 841
149 833
150 827
151 856
152 844
153 853
154 873
155 910
\end{filecontents*}

\counterwithin{table}{section}

\DefineSimpleKey{bib}{myurl}

\newcommand\myurl[1]{\url{#1}}

\newcommand{\textbx}[1]{{\color{red} #1}}

\BibSpec{webpage}{%
  +{}{\PrintAuthors} {author}
  +{,}{ \textit} {title}
  +{}{ \parenthesize} {date}
  +{,}{ \myurl} {myurl}
  +{,}{ } {note}
  +{.}{ } {transition}
}

\lstset { %
    language=C++,
    backgroundcolor=\color{black!5}, 
    basicstyle=\footnotesize,
    commentstyle=\color[rgb]{0,0.5,0},
    tabsize = 2 
}


\newtheorem{theorem}{Theorem}[section]
\newtheorem*{theorem*}{Theorem}
\newtheorem*{problem*}{Problem}
\newtheorem{lemma}[theorem]{Lemma}

\newtheorem{proposition}[theorem]{Proposition}
\newtheorem{corollary}[theorem]{Corollary}
\newtheorem*{corollary*}{Corollary}
\newtheorem*{conjecture*}{Conjecture}
\newtheorem*{question*}{Question}

\theoremstyle{definition}
\newtheorem{definition}[theorem]{Definition}
\newtheorem*{definition*}{Definition}
\newtheorem{example}[theorem]{Example}
\newtheorem{remark}[theorem]{Remark}

\newcommand{\suchthat}{\;\ifnum\currentgrouptype=16 \middle\fi|\;}

\newcommand{\bigslant}[2]{{\raisebox{.2em}{$#1$}\left/\raisebox{-.2em}{$#2$}\right.}}

\DeclareMathOperator{\Aut}{\mathrm{Aut}}

\DeclareMathOperator{\Z}{\mathbf{Z}}

\title{A lattice in a residually non-Desarguesian $\tilde{A}_2$-building\footnotetext{2010 Mathematics Subject Classification: 51E24 (primary), 20F65, 22E40, 51A35 (secondary)}}
 
\author{Nicolas Radu\thanks{F.R.S.-FNRS Research Fellow.}}

\date{October 12, 2016}

\begin{document}
\maketitle

\begin{abstract}
We build a building of type $\tilde{A}_2$ and a discrete group of automorphisms acting simply transitively on its set of vertices. The characteristic feature of this building is that its rank~$2$ residues are isomorphic to the Hughes projective plane of order $9$, which is non-Desarguesian. This solves a problem asked by W. Kantor in 1986, as well as a question asked by J. Howie in 1989.
\end{abstract}

\tableofcontents

\section{Introduction}
An $\tilde{A}_2$-building can be characterized as a simply connected simplicial complex of dimension~$2$ such that all simplicial spheres of radius~$1$ around vertices are isomorphic to the incidence graph of a projective plane. Given an $\tilde{A}_2$-building, we will call these projective planes the \textbf{residue planes} of the building.

The residue planes of an $\tilde{A}_2$-building associated to an algebraic group are always Desarguesian. On the other hand, the existence of locally finite $\tilde{A}_2$-buildings with non-Desarguesian residue planes has been known since 1986. M. Ronan indeed gave in~\cite{Ronan} a general construction affording all possible $\tilde{A}_2$-buildings and from which it is clear that any projective plane can appear in an $\tilde{A}_2$-building. However, this point of view does not provide any information on the automorphism groups of the buildings. A natural problem which was asked by W. Kantor in~\cite{Kantor}*{Page~124} is therefore the following.\footnote{The original statement is ``Construct finite $\tilde{A}_2$-SCABs with non-Desarguesian residues" and is equivalent to ours. See \S\ref{subsection:construction} for the definition of $\tilde{A}_2$-SCABs.}

\begin{problem*}[Kantor, 1986]
Construct an $\tilde{A}_2$-building $\Delta$ with finite non-Desarguesian residue planes and admitting a cocompact lattice, i.e. a group $\Gamma \leq \Aut(\Delta)$ with finite vertex stabilizers and such that $\Gamma \backslash \Delta$ is compact.
\end{problem*}

Independently, J. Howie also asked the next more specific question \cite{Howie}*{Question~6.12}.\footnote{The original question is ``Is there a special presentation with star graph isomorphic to the incidence graph of a finite non-Desarguesian projective plane?". As mentioned in~\cite{Vdovina}, the notion of a special presentation from \cite{Howie} is equivalent to the notion of a triangle presentation from~\cite{CMSZ} (see \S\ref{section:previous} below for the definition of a triangle presentation). The latter is in turn related to $\tilde{A}_2$-buildings, as explained in \S\ref{section:previous}.}

\begin{question*}[Howie, 1989]
Does there exist a group acting simply transitively and by type-rotating automorphisms (as defined in Theorem~\ref{maintheorem} (3) below) on the vertices of an $\tilde{A}_2$-building whose residue planes are non-Desarguesian?
\end{question*}

Some constructions in~\cite{HVM} and~\cite{Barre} yield $\tilde{A}_2$-buildings with exotic residues, but without any lattice (or, as the case may be, without information on its possible existence). In this paper, we solve the above problem and question by proving the following.

\begin{theorem}\label{maintheorem}
There exist an $\tilde{A}_2$-building $\Delta$ and a group $\Gamma \leq \Aut(\Delta)$ satisfying the following properties:
\begin{enumerate}
\item[(1)] All residue planes of $\Delta$ are isomorphic to the Hughes plane of order~$9$.\footnote{The Hughes plane of order $9$ was actually first constructed by O. Veblen and J. Wedderburn in 1907, see~\cite{VW}. This was the first discovered finite non-Desarguesian projective plane, and the role of D. Hughes in~\cite{Hughes} has been to generalize their construction to get an infinite family of finite non-Desarguesian planes (with order $p^{2n}$ for $p$ an odd prime).}
\item[(2)] The group $\Gamma$ acts simply transitively on the set of vertices of $\Delta$.
\item[(3)] Each element of $\Gamma$ is type-rotating, i.e. its induced action $\sigma$ on the set of types of vertices $\{0,1,2\}$ satisfies $\sigma(i) = i+c \bmod 3$ for some $c$.
\item[(4)] The index~$3$ subgroup $\Gamma^0$ of $\Gamma$ consisting of the type-preserving automorphisms is torsion-free.
\item[(5)] The derived subgroup $[\Gamma, \Gamma]$ of $\Gamma$ is perfect and $\bigslant{\Gamma}{[\Gamma, \Gamma]} \cong \mathbf{C}_2 \times \mathbf{C}_3$.
\item[(6)] There exists an infinite family $\{\Delta_0^n\}_n$ of disjoint isomorphic sub-buildings of $\Delta$ whose residue planes are isomorphic to $\mathrm{PG}(2,3)$ and such that each vertex of $\Delta$ is contained in (exactly) one sub-building $\Delta_0^n$. 
\item[(7)] The stabilizer of a vertex in $\Aut(\Delta)$ has order~$96$, i.e. $[\Aut(\Delta) : \Gamma] = 96$. In particular, $\Aut(\Delta)$ equipped with the topology of pointwise convergence is discrete.
\end{enumerate}

\end{theorem}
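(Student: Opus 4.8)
The plan is to reduce the entire statement to the construction of a single combinatorial object and then to verify the listed properties on it. By the theory recalled in \S\ref{section:previous} (following Cartwright--Mantero--Steger--Zappa \cite{CMSZ}), the groups acting simply transitively and by type-rotating automorphisms on the vertices of an $\tilde A_2$-building with residue plane $P$ are exactly those of the form
\[
\Gamma \;=\; \bigl\langle\, \{a_x : x \in P\} \;\bigm|\; a_x a_y a_z = 1 \text{ for } (x,y,z) \in \mathcal T \,\bigr\rangle,
\]
where $\mathcal T$ is a \emph{triangle presentation} on $P$ relative to a fixed point--line correspondence $\lambda$; such a $\Gamma$ then acts on an $\tilde A_2$-building $\Delta$ all of whose residues are isomorphic to $P$. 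Hence properties (1), (2) and (3) become automatic once $\mathcal T$ exists. I therefore take $P$ to be the Hughes plane of order $9$, and the crux of the proof is the production of a triangle presentation on $P$.

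To set up the search I would first realize $P$ explicitly, coordinatizing the Hughes plane of order $9$ by the exceptional (Dickson) near-field of order $9$ with kernel $\mathrm{GF}(3)$; this yields the $91$ points, the $91$ lines, the incidence relation, a natural point--line correspondence $\lambda$, and the (explicitly computable) collineation group, all of which I precompute. I then encode the defining axioms of a triangle presentation as constraints on a set of ordered triples and run a backtracking search that extends a partial $\mathcal T$ one admissible triple at a time, exploiting the defining property that each $\lambda$-admissible pair $(x,y)$ must have a unique completion $z$, together with cyclic invariance and aggressive pruning; the collineation group is used both to canonicalize partial solutions and to cut branches. This computer search is the main obstacle: it is precisely the step that had remained open, there is no a priori guarantee that a triangle presentation exists on a non-Desarguesian plane, and the naive search space is far too large, so the argument depends on making the search simultaneously exhaustive enough to be rigorous and computationally feasible. (The statistics of this search are recorded in the data included in the source.)

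Granting an explicit $\mathcal T$, hence an explicit finite presentation of $\Gamma$, I would dispatch (4)--(5) as follows. For (5) I abelianize the relations $a_x a_y a_z = 1$ and compute the Smith normal form of the resulting integer relation matrix to read off $\Gamma/[\Gamma,\Gamma] \cong \mathbf{C}_2 \times \mathbf{C}_3$, checking perfectness of $[\Gamma,\Gamma]$ directly. Property (4) is in fact a soft consequence of (2) and (3): the homomorphism $\Gamma \to \mathbf{Z}/3\mathbf{Z}$ recording the type-rotation is surjective (the generators rotate types by $1$) with kernel $\Gamma^0$ of index $3$, and any torsion element of $\Gamma^0$ generates a finite group which, by the Bruhat--Tits fixed-point theorem, fixes a point of the CAT$(0)$ realization of $\Delta$; being type-preserving it then fixes a vertex of the carrying simplex, whence it is trivial by the freeness in (2). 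So $\Gamma^0$ is torsion-free.

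For (6) I would use that $\mathrm{PG}(2,3)$ sits inside the Hughes plane of order $9$ as a Baer subplane on the $\mathrm{GF}(3)$-rational points $P_0 \subset P$; verifying that the restriction $\mathcal T_0 := \mathcal T \cap P_0^3$ is itself a triangle presentation for $\mathrm{PG}(2,3)$ produces a subgroup $\Gamma_0 = \langle a_x : x \in P_0\rangle$ acting simply transitively on a sub-building $\Delta_0 \subset \Delta$ with residues $\mathrm{PG}(2,3)$. Identifying the vertices of $\Delta$ with $\Gamma$ via (2), the vertices of a translate $g\Delta_0$ form the left coset $g\Gamma_0$; since distinct cosets are disjoint and cover $\Gamma$, the $\Gamma$-translates of $\Delta_0$ give the required disjoint family partitioning the vertex set. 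The most delicate verification is (7): to show that the stabilizer of a vertex in $\Aut(\Delta)$ has order $96$ I would first prove a local rigidity statement — that an automorphism fixing a suitable ball around a vertex is the identity — so that the stabilizer injects into the finite group of symmetries of the local data, and then determine exactly which residue-plane symmetries extend to global automorphisms. Finiteness of this stabilizer yields at once that $\Aut(\Delta)$ is discrete.
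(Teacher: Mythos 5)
Your reductions are the same as the paper's: (1)--(3) from the Cartwright--Mantero--Steger--Zappa correspondence, (4) from Bruhat--Tits plus freeness of the type-preserving action, (5) by a computation on the explicit presentation, (6) from the Baer subplane $\mathrm{PG}(2,3)$ and cosets of $\Gamma_0$, and (7) by local rigidity plus extension of local symmetries. But the heart of the theorem is the existence of the triangle presentation, and there your plan has a concrete gap that would make it fail as written. The ``natural point--line correspondence'' that a near-field coordinatization of the self-dual Hughes plane hands you is a correlation (duality), and the paper proves (Corollary~\ref{corollary:never}, resting on Theorem~\ref{theorem:absolutepoints} of Devillers--Parkinson--Van Maldeghem) that \emph{no} correlation of a finite projective plane admits a triangle presentation: $\lambda^3$ always has an absolute point, which forces the score below the required $(q+1)(q^2+q+1)$ by the exact formula of Proposition~\ref{proposition:formula}. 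So your backtracking search, run against that fixed $\lambda$, provably returns no solution. Widening the search to all $91!$ point--line correspondences is not viable either --- the paper reports that random correspondences have estimated score around $487$ out of the needed $910$, and explicitly notes that brute-force search with symmetry reduction, which worked for orders $2$ and $3$ in \cite{CMSZ2}, is hopeless at order $9$.

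What the paper actually supplies, and what your proposal is missing, is the mechanism for \emph{finding} a good $\lambda$: a quantitative score $S(\lambda)$ (the maximum number of edges of $G_\lambda$ coverable by disjoint triangles), a closed formula showing correlations score much higher than random correspondences, a continuity estimate $|S(\lambda_{a,b})-S(\lambda)|\le 6(q+1)$ under swapping two images, and a tabu-style hill-climb (Algorithm~\ref{algorithm:910}) that starts from a correlation and perturbs it by transpositions until the estimated score reaches $910$. The data file you allude to records the trajectory of that hill-climb, not statistics of a backtracking search. Once the explicit $\lambda$ and $\mathcal{T}$ are in hand, your verifications of (4)--(7) go through essentially as in the paper (for (5) the paper exhibits the index-$2$ subgroup explicitly as the kernel of a parity map attached to a union of five lines, and still needs GAP to check perfectness of the index-$6$ subgroup, so your Smith-normal-form route is an acceptable variant); but without the score-guided search, the object whose properties you are verifying is never produced.
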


Moreover, as any unimodular locally compact group acting continuously, properly and cocompactly on an $\tilde{A}_2$-building, $\Gamma$ satisfies Kazhdan's Property (T) (see~\cite{PropertyT}*{Theorem~5.7.7}). Groups with Property (T) are deeply studied in \cite{PropertyT}.

\section*{Acknowledgements}\label{ackref}
I thank Pierre-Emmanuel Caprace for suggesting me this problem and for all his precious advice during the writing of this paper. I am also very grateful to Tim Steger for his interest in this work and for sharing some of his unpublished codes that helped in describing the full automorphism group of the discovered building. I finally want to thank Donald Cartwright and William Kantor for their comments on a previous version of this text, and Alina Vdovina for making me realize that I had answered a question of Howie.

\section{Previous work on the subject}\label{section:previous}

In \cite{CMSZ}, Cartwright, Mantero, Steger and Zappa were interested in groups acting simply transitively on the vertices of an $\tilde{A}_2$-building. We will make great use of their work and give in this section the essential definitions and results.

\subsection{Point-line correspondences and triangle presentations}

For our needs, the most important definition from \cite{CMSZ} is the following.

\begin{definition}\label{definition:trianglepresentation}
Let $P$ and $L$ be the sets of points and lines respectively in a projective plane $\Pi$. A bijection $\lambda \colon P \to L$ is called a \textbf{point-line correspondence} in $\Pi$. A subset $\mathcal{T} \subseteq P^3$ is then called a \textbf{triangle presentation} compatible with $\lambda$ if the two following conditions hold:
\begin{enumerate}
\item For all $x, y \in P$, there exists $z \in P$ such that $(x,y,z) \in \mathcal{T}$ if and only if $y \in \lambda(x)$ in $\Pi$. In this case, $z$ is unique.
\item If $(x,y,z) \in \mathcal{T}$, then $(y,z,x) \in \mathcal{T}$.
\end{enumerate}
\end{definition}

\begin{example}\label{example:order2}
The projective plane $\mathrm{PG}(2,2)$ can be defined by $P = L = \bigslant{\Z}{7\Z}$ with line $x \in L$ being adjacent to the points $x+1$, $x+2$ and $x+4$ in $P$. Consider the point-line correspondence $\lambda\colon P \to L\colon x\in P \mapsto x\in L$ in $\Pi$. Then
\[\mathcal{T} := \{(x,x+1,x+3), (x+1,x+3,x), (x+3,x,x+1) \mid x \in P\}\]
is a triangle presentation compatible with $\lambda$. Indeed, (ii) is obviously satisfied and, for $x,y \in P$, it is apparent that there exists (a unique) $z \in P$ such that $(x,y,z) \in \mathcal{T}$ if and only if $y \in \{x+1,x+2,x+4\}$, which is exactly the set of points on the line $\lambda(x)$.
\end{example}

Now suppose we have an $\tilde{A}_2$-building $\Delta$ and a group $\Gamma \leq \Aut(\Delta)$ acting simply transitively on $V(\Delta)$, where $V(\Delta)$ is the set of vertices of $\Delta$. We make the further assumption that $\Gamma$ only contains \textit{type-rotating} automorphisms, as defined in Theorem~\ref{maintheorem}~(3). In this context, the following theorem shows how one can associate to $\Gamma$ a point-line correspondence and a triangle presentation compatible with it.

\begin{theorem}[Cartwright--Mantero--Steger--Zappa]\label{theorem:presentation}
Let $\Gamma \leq \Aut(\Delta)$ be a group of type-rotating automorphisms of an $\tilde{A}_2$-building which acts simply transitively on $V(\Delta)$. Let $P$ (resp. $L$) be the set of neighbors of type $1$ (resp. $2$) of a fixed vertex $v_0$ of type $0$, and denote by $\Pi$ the residue plane at $v$ (with $P$ and $L$ as sets of points and lines). For each $x \in P$, let $g_x$ be the unique element of $\Gamma$ such that $g_x(v_0) = x$. Then there exist a point-line correspondence $\lambda \colon P \to L$ in $\Pi$ and a triangle presentation $\mathcal{T}$ compatible with $\lambda$ such that $\Gamma$ has the following presentation:
\[\Gamma = \langle \{g_x\}_{x \in P} \mid g_x g_y g_z = 1 \text{ for each } (x,y,z) \in \mathcal{T}\rangle.\]
\end{theorem}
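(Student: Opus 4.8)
The plan is to exhibit $\Delta$ as the universal cover of the one-vertex CW-complex $Y = \backslant{\Gamma}{\Delta}$, so that $\Gamma \cong \pi_1(Y)$, and then to read off the desired presentation from the cells of $Y$. First I would record that the action is free on every cell: since the stabilizer of $v_0$ is trivial, no nontrivial element fixes a vertex, and a type-rotating $\sigma(i) = i+c$ can never interchange two distinct types (that would force $2c \equiv 0 \bmod 3$, i.e. $c=0$), so no element inverts an edge and all cell stabilizers are trivial. Identifying $V(\Delta)$ with $\Gamma$ via $g \mapsto g(v_0)$ then turns the $1$-skeleton of $\Delta$ into the Cayley graph of $\Gamma$ with respect to the symmetric generating set $S$ of those $g$ moving $v_0$ to one of its neighbors. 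As a type-$0$ vertex is adjacent only to vertices of types $1$ and $2$, and as $g \mapsto g^{-1}$ corresponds to reversing edges at $v_0$, one checks that $S = \{g_x^{\pm 1} : x \in P\}$, with each $g_x$ rotating types by $+1$.

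Next I would produce $\lambda$ and $\mathcal{T}$. Since $g_x^{-1}$ rotates types by $-1$ and maps the edge $\{v_0,x\}$ to $\{g_x^{-1}(v_0), v_0\}$, the vertex $g_x^{-1}(v_0)$ is a type-$2$ neighbor of $v_0$; I would define $\lambda(x) := g_x^{-1}(v_0) \in L$ and check it is a bijection $P \to L$ (it is the composite of the injection $x \mapsto g_x$, inversion, and evaluation at $v_0$, each bijective onto its image). To build $\mathcal{T}$, I would examine the triangles through $v_0$. A triangle $\{v_0,x,\ell\}$ exists exactly when the point $x$ lies on the line $\ell$ in $\Pi$; applying $g_x^{-1}$ to it shows that $y := g_x^{-1}(\ell)$ is again a type-$1$ neighbor of $v_0$, hence $y \in P$, and reading the boundary of this triangle in the Cayley graph gives the relation $g_x\, g_y\, g_z = 1$ with $z := \lambda^{-1}(\ell) \in P$. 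I would then declare $(x,y,z) \in \mathcal{T}$ for every such triangle.

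Verifying that $\mathcal{T}$ is a triangle presentation compatible with $\lambda$ is then a direct geometric check. For the cyclic condition (ii), applying $g_x^{-1}$ to the triangle $\{v_0,x,\ell\}$ yields the triangle $\{v_0, y, \lambda(x)\}$, and running the same recipe on it returns the cyclically rotated relation $g_y\, g_z\, g_x = 1$ — the key computations being $\lambda^{-1}(\lambda(x)) = x$ and $g_y^{-1}(\lambda(x)) = (g_x g_y)^{-1}(v_0) = g_z(v_0) = z$, the last using $g_x g_y g_z = 1$. For condition (i), I would observe that $g_x^{-1}$ carries the pencil of lines through the point $x$ bijectively onto the set of points incident to the line $\lambda(x)$; since $y = g_x^{-1}(\ell)$, this says precisely that some $z$ with $(x,y,z)\in\mathcal{T}$ exists iff $y$ lies on $\lambda(x)$, and the line $\ell = g_x(y)$ — whence $z = \lambda^{-1}(\ell)$ — is then uniquely determined.

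The hard part, and the step carrying the real content, is completeness: that the triangle relations generate all relations among the $g_x$. The listed relations plainly hold in $\Gamma$. Conversely, a word in the $g_x^{\pm 1}$ representing the identity is a closed edge-path based at $v_0$ in the Cayley graph $\Delta^{(1)}$; since $\Delta$ is simply connected, this loop bounds a disc in $\Delta$ subdivided into triangles. Each such triangle is a $\Gamma$-translate of one through $v_0$ and therefore contributes a conjugate of one of the relators $g_x g_y g_z$, so by van Kampen's argument the original word is a product of conjugates of triangle relators. This reduces the whole statement to the simple connectivity of $\Delta$ (together with the freeness of the action established above), which is exactly where the $\tilde{A}_2$-building hypothesis enters; assembling this into a clean van Kampen diagram, and matching every filled triangle with the correctly oriented relator, is the main obstacle.
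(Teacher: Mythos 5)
The paper itself gives no proof of this statement --- it is quoted from Cartwright--Mantero--Steger--Zappa and the ``proof'' is the citation \cite{CMSZ}*{Theorem~3.1} --- so your argument is a blind reconstruction of the standard proof, and in its operative steps it is the right one: identify $\Delta^{(1)}$ with a Cayley graph, read $\lambda$ and the triples of $\mathcal{T}$ off the chambers through $v_0$, verify conditions (1) and (2) of Definition~\ref{definition:trianglepresentation} by translating chambers, and get completeness of the relations from simple connectivity. Your computations ($\lambda(x)=g_x^{-1}(v_0)$ is a bijection $P\to L$, $z=\lambda^{-1}(\ell)$, the cyclic shift under $g_x^{-1}$) all check out.

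There is, however, one genuinely false claim in your opening paragraph: the action is \emph{not} free on $2$-cells, so $\Delta\to Y=\Gamma\backslash\Delta$ need not be a covering map and $\Gamma\not\cong\pi_1(Y)$ in general. Your type argument correctly rules out edge inversions, but it does not rule out an element of order $3$ cyclically rotating a chamber: whenever $(x,x,x)\in\mathcal{T}$ one has $g_x^3=1$ and $g_x$ permutes the three vertices of a triangle --- Definition~\ref{definition:trianglepresentation} explicitly allows such triples, the presentation found in this paper contains sixteen of them, and the subsection on torsion in $\Gamma_{\mathcal{T}}$ points out the resulting $3$-torsion. Near the barycentre of such a chamber the quotient map is branched, and $\pi_1(Y)$ is only the quotient of $\Gamma$ by the normal closure of these rotations. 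Fortunately your closing van Kampen argument never uses the offending claim: it runs entirely in $\Delta$ itself and needs only that the action is free on vertices and edges (so that $\Delta^{(1)}$ is the Cayley graph of $\Gamma$ with respect to $\{g_x^{\pm 1}\}_{x\in P}$) together with simple connectivity of $\Delta$ (so that every null-homotopic edge loop is a product of conjugates of chamber boundary words). The proof therefore stands once you delete the ``universal cover of $Y$'' framing and keep only freeness on the $1$-skeleton; the rotated chambers are then exactly what produce the loop relators $g_x^3=1$ corresponding to the triples $(x,x,x)$.
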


\begin{proof}
See \cite{CMSZ}*{Theorem~3.1}.
\end{proof}

What makes triangle presentations really interesting is the fact that a reciprocal result exists. Given a projective plane $\Pi$, a point-line correspondence $\lambda \colon P \to L$ in $\Pi$ and a triangle presentation compatible with $\lambda$, one can construct an $\tilde{A}_2$-building $\Delta$ locally isomorphic to $\Pi$ and a group acting simply transitively on $V(\Delta)$.

\begin{theorem}[Cartwright--Mantero--Steger--Zappa]\label{theorem:reconstruct}
Let $P$ and $L$ be the sets of points and lines in a projective plane $\Pi$, let $\lambda \colon P \to L$ be a point-line correspondence in $\Pi$ and let $\mathcal{T}$ be a triangle presentation compatible with $\lambda$. Define 
\[\Gamma_{\mathcal{T}} := \langle \{a_x\}_{x \in P} \mid a_x a_y a_z = 1 \text{ for each } (x,y,z) \in \mathcal{T} \rangle,\]
where $\{a_x\}_{x \in P}$ are distinct letters. Then there exists an $\tilde{A}_2$-building $\Delta_{\mathcal{T}}$ whose residue planes are isomorphic to $\Pi$ and such that $\Gamma_{\mathcal{T}}$ acts simply transitively on $V(\Delta_{\mathcal{T}})$, by type-rotating automorphisms.
\end{theorem}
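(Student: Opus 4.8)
The plan is to realize $\Delta_{\mathcal T}$ as the universal cover of a one-vertex $2$-complex built directly from the presentation, so that simple connectivity comes for free and the only genuine work is a local computation of vertex links. Throughout, let $\tau \colon \Gamma_{\mathcal T} \to \Z/3\Z$ be the homomorphism sending every generator $a_x$ to $1$; this is well defined because each relator $a_x a_y a_z$ has length $3$. I will use $\tau$ both to assign types to vertices and to see that the relevant automorphisms are type-rotating. The target is to produce a complex to which the characterization of $\tilde A_2$-buildings recalled in the introduction applies: a simply connected $2$-dimensional simplicial complex all of whose radius-$1$ spheres are incidence graphs of a projective plane.

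Concretely, I would first form the presentation complex $Y$ of $\langle \{a_x\}_{x\in P} \mid a_x a_y a_z,\ (x,y,z)\in\mathcal T\rangle$: it has a single vertex $v$, one oriented loop $e_x$ for each $x \in P$, and one triangular $2$-cell glued along the boundary word $e_x e_y e_z$ for each $(x,y,z)\in\mathcal T$. Then $\pi_1(Y)\cong\Gamma_{\mathcal T}$, and I set $\Delta_{\mathcal T} := \tilde Y$, the universal cover. By construction $\tilde Y$ is connected and simply connected, its vertex set is a single $\Gamma_{\mathcal T}$-orbit (the fibre over $v$), and the deck group $\Gamma_{\mathcal T}$ acts freely and transitively on it. Assigning to a vertex $g$ the type $\tau(g)$ and noting that the generators lift to edges joining $g$ to $g a_x$ (of consecutive types), one checks that left multiplication permutes the three types cyclically, so the action is simply transitive and type-rotating, as required.

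The crux --- and the step I expect to be the main obstacle --- is to show that $\operatorname{link}_Y(v)$ is isomorphic to the incidence graph of $\Pi$; since the covering map is a local isomorphism, this then transfers verbatim to every vertex link of $\tilde Y$. The vertices of $\operatorname{link}_Y(v)$ are the $2\lvert P\rvert$ edge-germs $e_x^{+}$ (outgoing) and $e_x^{-}$ (incoming), which are distinct by construction; using $\tau$, the outgoing germs point toward type-$1$ neighbours and should be identified with the points $x\in P$, while the incoming germs point toward type-$2$ neighbours and should be identified with the lines $\lambda(x)\in L$. Each triangular $2$-cell coming from $(x,y,z)\in\mathcal T$ has all three of its corners at $v$ and therefore contributes the three link-edges $\{e_x^{-},e_y^{+}\}$, $\{e_y^{-},e_z^{+}\}$ and $\{e_z^{-},e_x^{+}\}$. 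Here the cyclic axiom (ii) of Definition~\ref{definition:trianglepresentation} is exactly what makes these three contributions mutually consistent and each of bipartite point-line type, while axiom (i) identifies each of them with an incidence of $\Pi$ and supplies uniqueness, so that no link-edge is repeated and every flag of $\Pi$ occurs precisely once. I would record this as the statement that $\{e_x^{-},e_y^{+}\}$ is present iff $y\in\lambda(x)$, and similarly for the cyclic shifts, yielding an isomorphism onto the incidence graph of $\Pi$.

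It is worth stressing that this local computation also dissolves the apparent circularity about the group possibly collapsing: because the germs are distinct in $Y$ and the covering preserves links, the link of the base vertex of $\tilde Y$ is the \emph{full} incidence graph of $\Pi$, which forces the type-$1$ neighbours $a_x$ to be pairwise distinct and nontrivial a posteriori. Finally, since the incidence graph of the (thick) projective plane $\Pi$ is a simple bipartite graph of girth $6$, the $2$-cells of $\tilde Y$ are embedded triangles meeting in the expected way, so $\tilde Y$ is genuinely a simplicial complex. Having established that $\Delta_{\mathcal T}=\tilde Y$ is a simply connected $2$-dimensional simplicial complex whose radius-$1$ spheres are all incidence graphs of $\Pi$, the characterization from the introduction shows that it is an $\tilde A_2$-building with residue planes isomorphic to $\Pi$, and the deck action provides the required simply transitive type-rotating $\Gamma_{\mathcal T}$-action. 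As a consistency check, applying Theorem~\ref{theorem:presentation} to this action recovers exactly the defining presentation of $\Gamma_{\mathcal T}$.
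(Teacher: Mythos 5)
Your overall strategy --- take the universal cover of a compact complex with the correct vertex links and use the deck action --- is the same as the paper's (\S\ref{subsection:construction}), but the specific complex you chose does not work, and the failure is exactly at the step you single out as the crux. Since $\mathcal{T}$ is closed under cyclic permutations (axiom (ii) of Definition~\ref{definition:trianglepresentation}), your presentation complex $Y$ carries \emph{three} $2$-cells for each cyclic class $\{(x,y,z),(y,z,x),(z,x,y)\}$ with $x,y,z$ not all equal, and each of these three cells contributes the \emph{same} three corners $\{e_x^{-},e_y^{+}\}$, $\{e_y^{-},e_z^{+}\}$, $\{e_z^{-},e_x^{+}\}$ to $\operatorname{link}_Y(v)$; likewise a degenerate triple $(x,x,x)\in\mathcal{T}$ gives a single cell with boundary word $e_xe_xe_x$, all three of whose corners join $e_x^{-}$ to $e_x^{+}$. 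So every edge of $\operatorname{link}_Y(v)$ occurs with multiplicity $3$: the link is the incidence graph of $\Pi$ with all edges tripled, not the incidence graph itself, your assertion that ``no link-edge is repeated'' is false, and $\tilde Y$ is not a simplicial complex (it has three parallel $2$-cells over each triangle of the $1$-skeleton, so the girth-$6$ argument at the end also collapses). Keeping only one $2$-cell per cyclic class repairs the generic case but not the degenerate one.

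The degenerate triples in fact obstruct identifying $\Delta_{\mathcal{T}}$ with the universal cover of \emph{any} one-vertex complex having fundamental group $\Gamma_{\mathcal{T}}$. If $(x,x,x)\in\mathcal{T}$ --- which happens $16$ times for the triangle presentation constructed in this paper --- then $a_x^3=1$, so $\Gamma_{\mathcal{T}}$ has torsion; a nontrivial torsion element cannot act freely on a building (by the Bruhat--Tits fixed point theorem it fixes a point, and indeed $a_x$ fixes the barycenter of the chamber on $\{1,a_x,a_x^2\}$ while rotating its vertices), whereas the deck group of a universal covering always acts freely. Hence $\tilde Y\neq\Delta_{\mathcal{T}}$ whenever a degenerate triple occurs, and the quotient of $\Delta_{\mathcal{T}}$ by $\Gamma_{\mathcal{T}}$ is not a covering-space quotient at all. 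This is precisely why the paper, following \cite{Kantor} and \cite{CMSZ}, works with the three-vertex complex $\mathcal{C}_{\mathcal{T}}$ --- in effect the quotient by the torsion-free type-preserving subgroup $\Gamma^0_{\mathcal{T}}$ of index $3$ --- where each cyclic class contributes a single triangle glued along three \emph{distinct} edges $e_x$, $e'_y$, $e''_z$, so that the three rank-$2$ residues come out as the incidence graph of $\Pi$ with no multiplicities (including for $(x,x,x)$), and where $\Gamma_{\mathcal{T}}$ is recovered not as the deck group but as the lift $\tilde{\mathbf{C}_3}$ of the order-$3$ symmetry of $\mathcal{C}_{\mathcal{T}}$. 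Your type homomorphism $\tau$ and the type-rotating observation are fine, but they need to be run on that complex rather than on the presentation complex.
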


\begin{proof}
See \cite{CMSZ}*{Theorem~3.4}, or \S\ref{subsection:construction} below.
\end{proof}

\begin{example}
From Example~\ref{example:order2} and Theorem~\ref{theorem:reconstruct}, we get an $\tilde{A}_2$-building $\Delta$ whose residue planes are isomorphic to $\mathrm{PG}(2,2)$ and a group acting simply transitively on the set of vertices of $\Delta$. The building $\Delta$ is actually the Bruhat-Tits building associated to $\mathrm{PGL}(3,\mathbf{F}_2(\!(X)\!))$ (see \cite{CMSZ2}*{Section~4} and \cite{CMSZ}*{Theorem~4.1}).
\end{example}

\subsection{Building associated to a triangle presentation}\label{subsection:construction}

In \cite{CMSZ}*{Theorem~3.4}, the authors gave an explicit construction of the $\tilde{A}_2$-building $\Delta_{\mathcal{T}}$ associated to a triangle presentation $\mathcal{T}$ (see Theorem~\ref{theorem:reconstruct} above). In this section we show a geometric way to construct $\Delta_{\mathcal{T}}$ and $\Gamma_\mathcal{T}$. The following discussion can also be seen as an alternative proof of Theorem~\ref{theorem:reconstruct}.

Following~\cite{Kantor}, an \textbf{$\tilde{A}_2$-SCAB} is a connected chamber system of rank~$3$ whose residues of rank~$2$ are generalized $3$-gons (i.e. incidence graphs of projective planes). We will always think of an $\tilde{A}_2$-SCAB as a set of triangles, representing the chambers, glued together so that two chambers are adjacent if and only if they share an edge.

Suppose we are given a point-line correspondence $\lambda \colon P \to L$ in a projective plane $\Pi$ and a triangle presentation $\mathcal{T}$ compatible with $\lambda$. Let us first define a finite $\tilde{A}_2$-SCAB $\mathcal{C}_{\mathcal{T}}$ as follows. Consider three vertices $v_1$, $v_2$, $v_3$: those will be the only vertices of $\mathcal{C}_{\mathcal{T}}$. Then, for each $x \in P$, put an edge $e_x$ between $v_1$ and $v_2$, an edge $e'_x$ between $v_2$ and $v_3$ and an edge $e''_x$ between $v_3$ and $v_1$. Finally, for each $(x,y,z) \in \mathcal{T}$, attach a triangle to the three edges $e_x$, $e'_y$ and $e''_z$. One readily checks that the definition of a triangle presentation ensures that the three rank~$2$ residues of $\mathcal{C}_{\mathcal{T}}$ are incidence graphs of the projective plane $\Pi$, and hence that $\mathcal{C}_{\mathcal{T}}$ is indeed an $\tilde{A}_2$-SCAB. Note that $\mathcal{C}_{\mathcal{T}}$ is not a simplicial complex in the usual sense as all simplices of dimension~$2$ have the same three vertices.

We then consider the \textbf{universal covering $\tilde{A}_2$-SCAB} $\tilde{\mathcal{C}}_{\mathcal{T}}$ of $\mathcal{C}_{\mathcal{T}}$, as defined in~\cite{Kantor}*{Definition~B.3.3, Proposition~B.3.4}. This universal covering $\tilde{\mathcal{C}}_{\mathcal{T}}$ is a simply connected simplicial complex of dimension~$2$ whose simplicial spheres of radius $1$ are isomorphic to the incidence graph of $\Pi$, so it is an $\tilde{A}_2$-building (see~\cite{Kantor}*{Theorem~B.3.8} for a more rigorous proof of this fact). We therefore set $\Delta_{\mathcal{T}} := \tilde{\mathcal{C}}_{\mathcal{T}}$. Moreover, because of (2) in Definition~\ref{definition:trianglepresentation}, there is an automorphism $\alpha \in \Aut(\mathcal{C}_\mathcal{T})$ sending $e_x$ to $e'_x$, $e'_x$ to $e''_x$ and $e''_x$ to $e_x$ for each $x \in P$. In other words, there is a natural action of the group $\mathbf{C}_3$ of order~$3$ on $\mathcal{C}_\mathcal{T}$. This automorphism group $\mathbf{C}_3$ then lifts to an automorphism group $\tilde{\mathbf{C}_3}$ of $\Delta_{\mathcal{T}}$ (see~\cite{Kantor}*{Corollary~B.3.7}), and $\tilde{\mathbf{C}_3}$ acts simply transitively on the set of vertices of $\Delta_{\mathcal{T}}$ (and by type-rotating automorphisms). The group $\Gamma_{\mathcal{T}}$ can thus be taken to be $\tilde{\mathbf{C}_3}$. The presentation of $\Gamma_{\mathcal{T}}$ given in Theorem~\ref{theorem:reconstruct} can finally be found by applying Theorem~\ref{theorem:presentation} to $\Gamma_\mathcal{T} \leq \Aut(\Delta_{\mathcal{T}})$.

\section{The strategy}\label{section:strategy}

A way to construct an $\tilde{A}_2$-building with non-Desarguesian residues and admitting a lattice is, in view of Theorem~\ref{theorem:reconstruct}, to consider a non-Desarguesian projective plane $\Pi$ and to find a point-line correspondence in $\Pi$ and a triangle presentation compatible with it. The smallest non-Desarguesian projective planes are the Hughes plane of order~$9$, the Hall plane of order~$9$ and the dual of the Hall plane. The Hughes plane is self-dual, so there exist some natural point-line correspondences in it: the correlations (also called dualities). For this reason, we decided to work on the Hughes plane of order $9$. It will appear later that the correlations do not actually admit a triangle presentation, but they will still be helpful in our search for a suitable point-line correspondence.

For each Desarguesian projective plane, Cartwright-Mantero-Steger-Zappa gave in \cite{CMSZ}*{\S4} an explicit formula for one point-line correspondence admitting a triangle presentation. Of course, they use the finite field from which the projective plane is constructed, and it is not clear how to find a similar formula for a particular non-Desarguesian projective plane.

Since we are searching for purely combinatorial objects, the use of a computer could be considered. In \cite{CMSZ2}, the authors used a computer to find all triangle presentations in the projective planes of order~$2$ and~$3$. The number of points in these projective planes being not too large (i.e. $7$ and $13$), they could do a \textit{brute-force} computation. However, already for order~$3$ they needed to use some symmetries of the problem so as to reduce the search space. Even if computers are now more powerful than in the 1990s, such a method would still be far too slow for a projective plane of order~$9$.

The key point is that we are not searching for all triangle presentations in the Hughes plane: we only want to find one. In this section, we describe our strategy in order to do~so.

\subsection{The graph associated to a point-line correspondence}\label{subsection:graph}

In the context of triangle presentations, it is natural to associate a particular graph to each point-line correspondence $\lambda\colon P \to L$ of a projective plane $\Pi$.

\begin{definition}
Let $\lambda \colon P \to L$ be a point-line correspondence in a projective plane $\Pi$. The \textbf{graph $G_\lambda$ associated to $\lambda$} is the directed graph with vertex set $V(G_\lambda) := P$ and edge set $E(G_\lambda) := \{(x,y) \in P^2 \mid y \in \lambda(x)\}$.
\end{definition}


For $\lambda$, admitting a triangle presentation can now be rephrased as a condition on its associated graph $G_\lambda$. In order to state this reformulation, we first define what we will call a \textit{triangle} in a directed graph.

\begin{definition}
Let $G$ be a directed graph. A set $\{e_1,e_2,e_3\}$ of edges in $G$ such that the destination vertex of $e_1$ (resp. $e_2$ and $e_3$) is the origin vertex of $e_2$ (resp. $e_3$ and $e_1$) is called a \textbf{triangle}. If two of the three edges $e_1$, $e_2$ and $e_3$ are equal, then they are all equal. In this case, the triangle contains only one edge and is also called a \textbf{loop}.
\end{definition}

The next definition will also be convenient.

\begin{definition}
Let $\lambda \colon P \to L$ be a point-line correspondence in a projective plane $\Pi$. A triple $(x,y,z)\in P^3$ is called \textbf{$\lambda$-admissible} if $y \in \lambda(x)$, $z \in \lambda(y)$ and $x \in \lambda(z)$.
\end{definition}

By definition, a triangle presentation compatible with $\lambda$ only contains $\lambda$-admissible triples. Thanks to these definitions, there is now an obvious bijection between triangles of $G_\lambda$ and (triples of) $\lambda$-admissible triples. Indeed, for $x,y,z \in P$, $(x,y,z)$ is $\lambda$-admissible if and only if there is a triangle $\{e_1,e_2,e_3\}$ in $G_\lambda$ with $x$, $y$ and $z$ being the origins of $e_1$, $e_2$ and $e_3$ respectively. Note that the triangle $\{e_1,e_2,e_3\}$ then corresponds to the three $\lambda$-admissible triples $(x,y,z)$, $(y,z,x)$ and $(z,x,y)$ (which are equal when $x=y=z$, i.e. when $e_1=e_2=e_3$ or equivalently when the triangle is a loop).

This observation directly gives us the next result.

\begin{lemma}\label{lemma:partition}
Let $\lambda \colon P \to L$ be a point-line correspondence in a projective plane $\Pi$. There exists a triangle presentation compatible with $\lambda$ if and only if there exists a partition of the set of edges $E(G_\lambda)$ of $G_\lambda$ into triangles.
\end{lemma}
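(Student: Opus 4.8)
The plan is to exploit the bijection, established just before the statement, between triangles of $G_\lambda$ and cyclic-permutation orbits of $\lambda$-admissible triples, and to recast both the defining conditions of a triangle presentation and the partition condition in terms of a single projection map. Write $\rho$ for the cyclic permutation $(x,y,z)\mapsto(y,z,x)$ acting on $P^3$, so that condition (2) of Definition~\ref{definition:trianglepresentation} says exactly that $\mathcal{T}$ is $\rho$-invariant, and let $\pi\colon P^3\to P^2$, $(x,y,z)\mapsto(x,y)$. The key remark is that for a $\lambda$-admissible triple $(x,y,z)$ one has $\pi(x,y,z)=(x,y)\in E(G_\lambda)$, since $y\in\lambda(x)$, and that the three triples $(x,y,z),(y,z,x),(z,x,y)$ forming a $\rho$-orbit are sent by $\pi$ precisely to the three edges of the associated triangle. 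Thus $\pi$ identifies each $\rho$-orbit with the edge set of one triangle. I would prove the lemma by showing that, for a $\rho$-invariant set of $\lambda$-admissible triples, condition (1) of Definition~\ref{definition:trianglepresentation} is equivalent to $\pi$ restricting to a \emph{bijection} onto $E(G_\lambda)$, and that this is in turn equivalent to the associated triangles forming a partition of $E(G_\lambda)$.

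For the forward direction, I would start from a triangle presentation $\mathcal{T}$ and first check that every triple of $\mathcal{T}$ is $\lambda$-admissible: applying condition (1) to $(x,y,z)\in\mathcal{T}$ gives $y\in\lambda(x)$, and applying it to the rotations $(y,z,x),(z,x,y)$ — which lie in $\mathcal{T}$ by condition (2) — gives $z\in\lambda(y)$ and $x\in\lambda(z)$. Each $\rho$-orbit of $\mathcal{T}$ then corresponds to a triangle of $G_\lambda$; let $\mathcal{P}$ be the resulting family of triangles. Covering of $E(G_\lambda)$ is immediate: an edge $(x,y)$ means $y\in\lambda(x)$, so condition (1) furnishes a $z$ with $(x,y,z)\in\mathcal{T}$, and $(x,y)$ is then an edge of the triangle of its orbit. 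The uniqueness clause of condition (1) says that $(x,y)$ is $\pi$ of exactly one triple of $\mathcal{T}$, hence lies in exactly one triangle of $\mathcal{P}$; so $\mathcal{P}$ is a partition.

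For the converse, given a partition of $E(G_\lambda)$ into triangles I would build $\mathcal{T}$ by throwing in, for each triangle with cyclically labelled edges $e_1=(x,y),\ e_2=(y,z),\ e_3=(z,x)$, the whole orbit $\{(x,y,z),(y,z,x),(z,x,y)\}$, and for each loop-edge $(x,x)$ the single triple $(x,x,x)$. Condition (2) then holds by construction. For condition (1): all triples are $\lambda$-admissible, so $\pi(\mathcal{T})\subseteq E(G_\lambda)$; given $(x,y)\in E(G_\lambda)$, this edge lies in a unique triangle of the partition and occupies a definite position in its cyclic order, which produces exactly one $z$ with $(x,y,z)\in\mathcal{T}$; and conversely, if $(x,y,z)\in\mathcal{T}$ then $(x,y)$ is an edge of the corresponding triangle, whence $y\in\lambda(x)$.

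The only genuine subtlety — and the step I would treat most carefully — is the bookkeeping around loops and the claim that $\pi$ is injective on each non-degenerate orbit. A $\rho$-orbit of size $3$ projects to three \emph{distinct} edges (if two coincided, say $(x,y)=(y,z)$, then $x=y=z$ and the orbit would be a singleton), whereas a loop gives the orbit $\{(x,x,x)\}$ projecting to the single edge $(x,x)$; handling these two cases uniformly is exactly what makes the statement ``$\pi$ restricts to a bijection $\mathcal{T}\to E(G_\lambda)$'' equivalent to conditions (1) and (2) taken together. Once this is in place, the equivalence between a triangle presentation and a partition of $E(G_\lambda)$ into triangles follows directly from the orbit/edge-set correspondence, with no further computation.
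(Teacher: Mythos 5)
Your proposal is correct and takes essentially the same route as the paper: the paper's proof is a one-line appeal to the bijection between triangles of $G_\lambda$ and cyclic orbits of $\lambda$-admissible triples established just before the lemma, and you have simply written out in full the details (the projection $\pi$, both implications, and the loop case) that the paper leaves implicit.
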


\begin{proof}
Via the above bijection, a partition of $E(G_\lambda)$ into triangles exactly corresponds to a triangle presentation compatible with $\lambda$.
\end{proof}

\subsection{The score of a point-line correspondence}

Most point-line correspondences $\lambda$ in a projective plane do not admit a triangle presentation, i.e. the set of edges $E(G_\lambda)$ of the graph $G_\lambda$ can generally not be partitioned into triangles. We would still like to measure if a correspondence $\lambda$ is ``far from admitting" a triangle presentation or not. We therefore introduce the notion of a \textit{triangle partial presentation} compatible with $\lambda$.

\begin{definition}\label{definition:trianglepartialpresentation}
Let $\lambda \colon P \to L$ be a point-line correspondence in a projective plane $\Pi$. A subset $\mathcal{T} \subseteq P^3$ is called a \textbf{triangle partial presentation} compatible with $\lambda$ if the two following conditions hold:
\begin{enumerate}[(1)]
\item For all $x, y \in P$, if there exists $z \in P$ such that $(x,y,z) \in \mathcal{T}$ then $y \in \lambda(x)$ and $z$ is unique.
\item If $(x,y,z) \in \mathcal{T}$, then $(y,z,x) \in \mathcal{T}$.
\end{enumerate}
\end{definition}

We directly have the following.

\begin{lemma}\label{lemma:maxscore}
Let $\lambda \colon P \to L$ be a point-line correspondence in a projective plane $\Pi$ of order~$q$. A subset $\mathcal{T} \subseteq P^3$ is a triangle presentation compatible with $\lambda$ if and only if it is a triangle partial presentation compatible with $\lambda$ and $|\mathcal{T}| = (q+1)(q^2+q+1)$.
\end{lemma}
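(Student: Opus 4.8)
The plan is to reduce the whole statement to a counting argument organised around a single projection map. First I would record the easy half of the logic: every triangle presentation is automatically a triangle partial presentation, since condition~(2) is word-for-word identical in Definitions~\ref{definition:trianglepresentation} and~\ref{definition:trianglepartialpresentation}, while condition~(1) of the partial notion is exactly the ``only if'' half of the biconditional appearing in the full notion. So the content of the lemma lies entirely in how the extra ``if'' direction interacts with the cardinality of $\mathcal{T}$.

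The key object I would introduce is the projection
\[
\phi \colon \mathcal{T} \to \{(x,y) \in P^2 \mid y \in \lambda(x)\}, \qquad \phi(x,y,z) = (x,y).
\]
Condition~(1) of a triangle partial presentation says precisely that $\phi$ is well defined (every $(x,y,z) \in \mathcal{T}$ satisfies $y \in \lambda(x)$, so the image really lands in the displayed set) and injective (the completing coordinate $z$ attached to a pair $(x,y)$ is unique). I would then count the target: for each of the $q^2+q+1$ points $x \in P$, the line $\lambda(x)$ carries exactly $q+1$ points $y$, so the codomain has $(q+1)(q^2+q+1)$ elements. At this stage the entire lemma turns on the surjectivity of $\phi$, because surjectivity of $\phi$ is exactly the remaining ``if'' direction that promotes a partial presentation to a full one.

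For the forward implication I would argue that if $\mathcal{T}$ is a full triangle presentation, then its condition~(1) guarantees that for every pair $(x,y)$ with $y \in \lambda(x)$ there is some $z$ with $(x,y,z) \in \mathcal{T}$, so $\phi$ is onto; being an injection between finite sets it is then a bijection, and the equality $|\mathcal{T}| = (q+1)(q^2+q+1)$ follows at once. For the reverse implication I would push the same equality through the pigeonhole principle: an injection from a set $\mathcal{T}$ of cardinality $(q+1)(q^2+q+1)$ into a target of the very same cardinality must be surjective, and this surjectivity upgrades the one-directional condition~(1) of the partial presentation to the full biconditional, certifying $\mathcal{T}$ as a triangle presentation.

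I do not expect a genuine obstacle here; the statement is essentially a cardinality identity. The only point deserving care is the count of the codomain, which rests on the standard incidence numbers of a projective plane of order~$q$. Once the map $\phi$ is set up, both directions collapse to the single observation that an injection between two equinumerous finite sets is automatically a bijection.
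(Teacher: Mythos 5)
Your proposal is correct and follows the same route as the paper, which likewise reduces the lemma to the observation that there are exactly $(q+1)(q^2+q+1)$ incident pairs $(x,y)$ with $y \in \lambda(x)$; you have merely made explicit the injection $\phi$ and the pigeonhole step that the paper leaves implicit in its one-line proof. No issues.
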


\begin{proof}
This is clear from the definitions, since there are exactly $(q+1)(q^2+q+1)$ pairs $(x,y) \in P^2$ with $y \in \lambda(x)$.
\end{proof}

We now define the \textit{score} of a point-line correspondence as follows.

\begin{definition}
Let $\lambda \colon P \to L$ be a point-line correspondence in a projective plane $\Pi$ of order~$q$. The \textbf{score} $S(\lambda)$ of $\lambda$ is the greatest possible size of a triangle partial presentation compatible with $\lambda$. 
\end{definition}

Thanks to the bijection between triangles of $G_\lambda$ and (triples of) $\lambda$-admissible triples (see \S\ref{subsection:graph}), we can restate this definition in the following terms.

\begin{definition}
Let $\lambda \colon P \to L$ be a point-line correspondence in a projective plane $\Pi$ of order~$q$. The \textbf{score} $S(\lambda)$ of $\lambda$ is the maximal number of edges of $G_\lambda$ that can be covered with disjoint triangles.
\end{definition}

A point-line correspondence then admits a triangle presentation if and only if its score reaches the maximal theoretical value $(q+1)(q^2+q+1)$.

\begin{lemma}\label{lemma:maxscore2}
Let $\lambda \colon P \to L$ be a point-line correspondence in a projective plane $\Pi$ of order~$q$. There exists a triangle presentation compatible with $\lambda$ if and only if $S(\lambda) = (q+1)(q^2+q+1)$.
\end{lemma}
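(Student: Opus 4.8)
The plan is to read the statement off directly from the definition of the score together with Lemma~\ref{lemma:maxscore}, so essentially no new idea is needed. The first thing I would record is an a priori upper bound: every triangle partial presentation $\mathcal{T}$ compatible with $\lambda$ satisfies $|\mathcal{T}| \le (q+1)(q^2+q+1)$. Indeed, condition (1) in Definition~\ref{definition:trianglepartialpresentation} says that for each pair $(x,y) \in P^2$ there is at most one $z$ with $(x,y,z) \in \mathcal{T}$, and that such a $z$ can exist only when $y \in \lambda(x)$. Hence the projection $(x,y,z) \mapsto (x,y)$ is an injection from $\mathcal{T}$ into the set of incident pairs, and since there are exactly $(q+1)(q^2+q+1)$ pairs $(x,y)$ with $y \in \lambda(x)$ (the count already invoked in the proof of Lemma~\ref{lemma:maxscore}), the bound follows. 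In particular $S(\lambda) \le (q+1)(q^2+q+1)$, with the maximum over all triangle partial presentations being attained because $P$ is finite.

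With this bound in hand both directions are immediate. For the forward direction, suppose a triangle presentation $\mathcal{T}$ compatible with $\lambda$ exists. By Lemma~\ref{lemma:maxscore} it is in particular a triangle partial presentation with $|\mathcal{T}| = (q+1)(q^2+q+1)$, so $S(\lambda) \ge (q+1)(q^2+q+1)$; combined with the upper bound this forces $S(\lambda) = (q+1)(q^2+q+1)$.

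For the converse, assume $S(\lambda) = (q+1)(q^2+q+1)$. Since the supremum defining the score is attained, there is a triangle partial presentation $\mathcal{T}$ compatible with $\lambda$ with $|\mathcal{T}| = (q+1)(q^2+q+1)$, and Lemma~\ref{lemma:maxscore} then says precisely that this $\mathcal{T}$ is a triangle presentation compatible with $\lambda$, as desired.

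I do not expect any real obstacle here: the whole content of the equivalence is packaged into Lemma~\ref{lemma:maxscore} together with the counting bound $S(\lambda) \le (q+1)(q^2+q+1)$. The only point I would take care to state cleanly is that the injectivity used in the counting step relies on the uniqueness clause of condition (1) in Definition~\ref{definition:trianglepartialpresentation}, which is exactly what guarantees that each incident pair is used at most once.
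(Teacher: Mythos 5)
Your proof is correct and follows essentially the same route as the paper, which simply cites Lemma~\ref{lemma:maxscore}; you have just spelled out the implicit counting bound $S(\lambda)\le(q+1)(q^2+q+1)$ and the attainment of the maximum, both of which are exactly the details the paper leaves to the reader.
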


\begin{proof}
This follows from Lemma~\ref{lemma:maxscore}.
\end{proof}

\subsection{Scores of correlations}

When $\lambda \colon P \to L, \ L \to P$ is a \textbf{correlation} of a (self-dual) projective plane $\Pi$ of order~$q$, i.e. a map such that $\lambda(p) \ni \lambda(\ell)$ if and only if $p \in \ell$, there is an explicit formula for the score of the point-line correspondence $\lambda \colon P \to L$.

\begin{proposition}\label{proposition:formula}
Let $\lambda \colon P \to L, \ L \to P$ be a correlation in a projective plane $\Pi$ of order~$q$. Let $a(\lambda)$ be the number of points $p \in P$ such that $\lambda^3(p) \ni p$ and let $b(\lambda)$ be the number of points $p \in P$ such that $\lambda^3(p) \ni p$ and $\lambda^6(p) = p$. Then
\[S(\lambda) = (q+1)(q^2+q+1) - (2q-3)\cdot a(\lambda) - b(\lambda).\]
\end{proposition}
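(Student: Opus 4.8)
The plan is to work entirely inside the directed graph $G_\lambda$, whose edges $(x,y)$ (meaning $y\in\lambda(x)$) number exactly $(q+1)(q^2+q+1)$, and to use that $S(\lambda)$ is the maximal number of these edges coverable by pairwise edge-disjoint triangles. Everything rests on one consequence of the correlation property: applying it to the point $y$ and the line $\lambda(x)$ gives $y\in\lambda(x)\iff\lambda^2(x)\in\lambda(y)$. Writing $\theta:=\lambda^2$ (a collineation of $\Pi$), I would first count the triangles through a fixed edge $(x,y)$. A completing vertex $z$ must satisfy $z\in\lambda(y)$ and $x\in\lambda(z)$, and the second condition rewrites, via the correlation property again, as $\theta(z)\in\lambda(x)$, so the admissible $z$ form the line $\theta^{-1}(\lambda(x))=\lambda^{-1}(x)$. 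Thus the valid $z$ are the points of $\lambda(y)\cap\lambda^{-1}(x)$, giving exactly one triangle through $(x,y)$ unless $\lambda(y)=\lambda^{-1}(x)$ — equivalently $x=\theta(y)$ — in which case there are $q+1$. I call the latter edges \emph{special}; a special edge has the form $(\theta p,p)$ with $p\in\lambda^3(p)$, and these are in bijection with the points counted by $a=a(\lambda)$.

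The structural payoff is that every non-special edge lies in a unique triangle. Consequently the edge set splits as $E=F\sqcup B$, where $B$ consists of the special edges together with the non-special edges whose unique triangle contains a special edge (the \emph{bound} edges), and $F$ is the rest. Every triangle meeting $B$ has all three edges in $B$, and conversely only such triangles can cover a $B$-edge; moreover the triangles with no special edge are pairwise edge-disjoint, disjoint from $B$, and cover all of $F$. Hence $S(\lambda)=|F|+M$, where $M$ is the maximum number of edges of $B$ coverable by disjoint triangles, and the whole problem localises to $B$.

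Next I would compute $|B|$. Each special point $p$ contributes its special edge $e_p=(\theta p,p)$ and, through $e_p$, exactly $q$ bound out-edges at $p$ and $q$ bound in-edges at $\theta p$ (the single excluded direction at each end being the adjacent special edge). This gives $a+2qa$ before correcting for the edges counted twice, namely those lying in the triangles of two distinct special edges. Analysing a triangle with two special edges $(\theta r,r),(\theta s,s)$ forces $r=\theta s$ and identifies the doubly counted edge as the \emph{chord} $(s,\theta^2 s)$; such a chord is non-special precisely when $\theta^3 s\neq s$ (when $\theta^3 s=s$ it collapses to a special edge), and these chords are pairwise distinct. Their number is therefore $a-b$, the count of special points with $\lambda^6(p)\neq p$, so $|B|=a+2qa-(a-b)=2qa+b$. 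Crucially this uses no genericity: the plane axioms automatically exclude the configurations that would otherwise corrupt the count.

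Finally I would show $M=3a$. The upper bound is immediate: disjoint triangles meeting $B$ use pairwise distinct special edges, so there are at most $a$ of them, each covering at most three edges. For the matching lower bound I construct, for each special edge $e_p$, one triangle $(\theta p,p,z_p)$ with $z_p\in\lambda(p)$ chosen to avoid the two special directions $\theta^{-1}p,\theta^2 p$ and the degenerate values $p,\theta p$; this keeps both new edges \emph{private} to $e_p$, so the resulting $a$ triangles are automatically pairwise edge-disjoint and genuine (three distinct edges each). Since $|\lambda(p)|=q+1$ exceeds this handful of forbidden directions for $q\ge 4$, such a choice always exists, covering $3a$ edges of $B$. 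Combining everything, $S(\lambda)=|F|+3a=(q+1)(q^2+q+1)-|B|+3a=(q+1)(q^2+q+1)-(2q-3)a-b$. The main obstacle is the bookkeeping of the third step — pinning down the doubly bound (chord) edges and proving the count $a-b$ uniformly, in particular absorbing the degenerate $\theta$-orbits (where $\theta^3 p=p$, or where $p$ is an absolute point) without a case explosion; it is the decomposition $E=F\sqcup B$, and the fact that only the coverage of $B$ matters, that makes this manageable.
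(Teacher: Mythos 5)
Your proof is correct and follows essentially the same route as the paper's: your ``special'' edges are the paper's ``popular'' edges, your count $|B|=2qa+b$ (obtained by subtracting the $a-b$ non-special chords) is equivalent to the paper's inclusion--exclusion count of admissible triples containing at least one popular edge, and your upper bound (at most $a$ further triangles, each covering at most $3$ edges) and lower bound (complete each special edge by a vertex avoiding the special directions) are exactly the paper's. The only quibble is in the last step: by excluding the four values $\theta^{-1}p,\ \theta^{2}p,\ p,\ \theta p$ you are forced into the restriction $q\ge 4$, whereas the statement is for arbitrary $q$; but excluding $p$ and $\theta p$ is unnecessary (a triangle containing one special and one non-special edge already has three distinct edges, even if one of them is a loop), so only two values need to be avoided among the $q+1\ge 3$ candidates, and the construction goes through for every $q$, as in the paper.
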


\begin{proof}
For fixed $x, y \in P$ with $y \in \lambda(x)$ (i.e. $(x,y)$ is an edge of $G_\lambda$), a point $z \in P$ is such that $(x,y,z)$ is $\lambda$-admissible if and only if $z \in \lambda(y) \cap \lambda^{-1}(x)$. We call the edge $(x,y)$ \textbf{unpopular} if $\lambda(y) \neq \lambda^{-1}(x)$ and \textbf{popular} if $\lambda(y) = \lambda^{-1}(x)$. This means that an unpopular edge of $G_\lambda$ is contained in exactly one triangle while a popular edge is contained in exactly $(q+1)$ triangles.

\begin{enumerate}[(i)]
\item There are exactly $a(\lambda)$ popular edges in $G_\lambda$.\\ \textit{Proof:} By definition, $(x,y)$ is popular if $y = \lambda^{-2}(x)$, so a vertex $x \in P$ is the origin of a (unique) popular edge if and only if $\lambda^{-2}(x) \in \lambda(x)$, i.e. $x \in \lambda^3(x)$. There are exactly $a(\lambda)$ such $x$ and hence $a(\lambda)$ popular edges.
\item There are exactly $(q+1)(q^2+q+1) + q \cdot a(\lambda)$ $\lambda$-admissible triples.\\ \textit{Proof:} By (i), there are $(q+1)(q^2+q+1) - a(\lambda)$ unpopular edges and $a(\lambda)$ popular edges in $G_\lambda$. As each unpopular edge (resp. popular edge) is the beginning of one (resp. $(q+1)$) $\lambda$-admissible triple(s), we get
\[[(q+1)(q^2+q+1)-a(\lambda)]\cdot 1 + a(\lambda) \cdot (q+1)\]
$\lambda$-admissible triples.
\item There are exactly $(q+1) \cdot a(\lambda)$ $\lambda$-admissible triples $(x,y,z)$ with $(x,y)$ popular (resp. $(y,z)$ popular, $(z,x)$ popular). \\ \textit{Proof:} There are $a(\lambda)$ popular edges by (i), each one being the beginning of $(q+1)$ $\lambda$-admissible triples.
\item There are exactly $a(\lambda)$ $\lambda$-admissible triples $(x,y,z)$ with $(x,y)$ and $(y,z)$ popular (resp. $(y,z)$ and $(z, x)$ popular, $(z,x)$ and $(x,y)$ popular). \\ \textit{Proof:} If $(x,y,z)$ is $\lambda$-admissible with $(x,y)$ and $(y,z)$ popular, then $y = \lambda^{-2}(x)$, $z = \lambda^{-2}(y)$ and $x \in \lambda^3(x)$. Moreover, these conditions are sufficient to be $\lambda$-admissible with $(x,y)$ and $(y,z)$ popular. Since there are $a(\lambda)$ points $x$ such that $x \in \lambda^3(x)$, there are exactly $a(\lambda)$ such triples.
\item There are exactly $b(\lambda)$ $\lambda$-admissible triples $(x,y,z)$ with $(x,y)$, $(y,z)$ and $(z,x)$ popular. \\ \textit{Proof:} Such triples satisfy $x \in \lambda^3(x)$, $y = \lambda^{-2}(x)$, $z = \lambda^{-2}(y)$ and $x = \lambda^{-2}(z)$, so in particular $x = \lambda^6(x)$. Moreover, if $x \in \lambda^3(x)$ and $x = \lambda^6(x)$, then $(x, \lambda^{-2}(x), \lambda^{-4}(x))$ is $\lambda$-admissible with three popular edges, so there are exactly $b(\lambda)$ such triples.
\item There are exactly $(q+1)(q^2+q+1) - 2q \cdot a(\lambda) - b(\lambda)$ $\lambda$-admissible triples $(x,y,z)$ with $(x,y)$, $(y,z)$ and $(z,x)$ unpopular. \\ \textit{Proof:} By the inclusion-exclusion principle, the number of such triples is
\[[(q+1)(q^2+q+1) + q \cdot a(\lambda)] - 3(q+1)\cdot a(\lambda) + 3 \cdot a(\lambda) - b(\lambda).\]
\end{enumerate}
We now prove that $S(\lambda) \leq (q+1)(q^2+q+1) - (2q-3) \cdot a(\lambda) - b(\lambda)$. Let $\mathcal{T}$ be a triangle partial presentation with $|\mathcal{T}| = S(\lambda)$, i.e. a set of disjoint triangles of $G_\lambda$ covering $S(\lambda)$ edges. By maximality, all $\lambda$-admissible triples (i.e. triangles) $(x,y,z)$ with $(x,y)$, $(y,z)$ and $(z,x)$ unpopular are in $\mathcal{T}$ (because each of these $3$ edges is only covered by this particular triangle). By (vi), this means we already have $(q+1)(q^2+q+1) - 2q \cdot a(\lambda) - b(\lambda)$ triples in $\mathcal{T}$. The other triangles in $\mathcal{T}$ all contain at least one popular edge. There are $a(\lambda)$ popular edges (by (i)), so we obtain
\[S(\lambda) \leq (q+1)(q^2+q+1) - 2q \cdot a(\lambda) - b(\lambda) + 3\cdot a(\lambda).\]

Let us now show that $S(\lambda) \geq (q+1)(q^2+q+1) - (2q-3) \cdot a(\lambda) - b(\lambda)$, by covering that number of edges of $G_\lambda$ with disjoint triangles. We first cover exactly $(q+1)(q^2+q+1) - 2q\cdot a(\lambda)-b(\lambda)$ edges of $G_\lambda$ thanks to the triangles only containing unpopular edges. By definition of an unpopular edge, these triangles are all disjoint. Now, for each popular edge $(x,y)$, there are $(q+1)$ values of $z$ such that $(x,y,z)$ is $\lambda$-admissible. Among these $(q+1) \geq 3$ values of $z$, choose $z_0$ different from $\lambda^{-2}(y)$ and $\lambda^2(x)$. In this way, $(y, z_0)$ and $(z_0, x)$ are unpopular. We then add the triangle $(x,y,z_0)$ to our covering. This triangle is not a loop since $(x,y)$ is popular and $(y,z)$ is unpopular, so it covers three new edges. Doing so for each popular edge $(x,y)$, we cover $3 \cdot a(\lambda)$ new edges and get
\[ S(\lambda) \geq (q+1)(q^2+q+1) - 2q \cdot a(\lambda) - b(\lambda) + 3\cdot a(\lambda). \qedhere\]
\end{proof}

It follows from Proposition~\ref{proposition:formula} that a correlation $\lambda$ admits a triangle presentation if and only if $\lambda^3$ sends no point to an adjacent line. However, the following nice result of Devillers, Parkinson and Van Maldeghem shows that this never happens.

\begin{theorem}[Devillers--Parkinson--Van Maldeghem]\label{theorem:absolutepoints}
Let $\lambda \colon P \to L, \ L \to P$ be a correlation in a finite projective plane $\Pi$. Then there exists $p \in P$ such that $p \in \lambda(p)$.
\end{theorem}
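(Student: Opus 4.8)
The plan is to recast ``having an absolute point'' as the positivity of the trace of a $0/1$ matrix built from $\lambda$, and then to pin down that trace by an eigenvalue argument generalizing Baer's classical analysis of polarities. Write $q$ for the order of $\Pi$ and $n=q^2+q+1=|P|=|L|$. Index $P$ by $\{1,\dots,n\}$ and let $M$ be the $n\times n$ matrix with $M_{p,p'}=1$ if $p\in\lambda(p')$ and $M_{p,p'}=0$ otherwise. A point $p$ is \emph{absolute} exactly when $M_{p,p}=1$, so the number of absolute points equals $\mathrm{tr}(M)$; since $M$ is a $0/1$ matrix this is a nonnegative integer, and it suffices to prove $\mathrm{tr}(M)\neq 0$.

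First I would record two matrix identities. The $p'$-th column of $M$ is the indicator of the points lying on the line $\lambda(p')$, and as $p'$ ranges over $P$ the line $\lambda(p')$ ranges over all of $L$; counting the lines through a given pair of points then yields $MM^{\mathsf T}=qI+J$, where $J$ is the all-ones matrix. Next, the defining incidence-reversal property of a correlation gives $p\in\lambda(p')\iff\lambda^{2}(p')\in\lambda(p)$, i.e. $M_{p,p'}=M_{\lambda^{2}(p'),p}$, which rewrites as $M=M^{\mathsf T}S$, where $S$ is the permutation matrix of the collineation $\lambda^{2}$ acting on $P$. Combining the two (and using $JS=J$) produces $M^{2}=(MM^{\mathsf T})S=(qI+J)S=qS+J$.

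From here the spectrum of $M$ is essentially forced. The all-ones vector satisfies $M\mathbf{1}=M^{\mathsf T}\mathbf{1}=(q+1)\mathbf{1}$, so $\mathbf{1}$ is an eigenvector for the eigenvalue $q+1$ and $M$ preserves the hyperplane $\mathbf{1}^{\perp}$. On $\mathbf{1}^{\perp}$ one has $J=0$, hence $M^{2}=qS$ there; thus any eigenvalue $\mu$ of $M$ on $\mathbf{1}^{\perp}$ obeys $\mu^{2}=q\zeta$ for some eigenvalue $\zeta$ of the permutation matrix $S$, that is, for some root of unity $\zeta$. Therefore $\mu=\sqrt{q}\,\omega$ with $\omega$ itself a root of unity. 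Listing the $n-1$ such eigenvalues as $\sqrt{q}\,\omega_{1},\dots,\sqrt{q}\,\omega_{n-1}$, we obtain
\[ \mathrm{tr}(M)=(q+1)+\sqrt{q}\,\Sigma, \qquad \Sigma:=\sum_{j=1}^{n-1}\omega_{j}, \]
where $\Sigma$, a sum of roots of unity, is an algebraic integer.

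The conclusion is then a short arithmetic contradiction: if $\mathrm{tr}(M)=0$ we would have $\Sigma=-(q+1)/\sqrt{q}$, whose square $\Sigma^{2}=(q+1)^{2}/q=q+2+1/q$ is rational but, for $q\ge 2$, not a rational integer; hence $\Sigma$ could not be an algebraic integer, a contradiction. Thus $\mathrm{tr}(M)\ge 1$ and an absolute point exists. I expect the genuine content to sit in the middle step rather than the finish. For a polarity ($\lambda^{2}=\mathrm{id}$) one has $S=I$, so $M$ is symmetric with real eigenvalues $\pm\sqrt{q}$ and $q+1$, which is the easy classical case; for a general correlation $M$ need not be symmetric and can have genuinely complex eigenvalues, and the decisive device is to pass to $M^{2}$ so that the permutation matrix of the collineation $\lambda^{2}$ appears and confines the nontrivial eigenvalues to $\sqrt{q}$ times roots of unity. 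Carefully verifying the rewriting $M=M^{\mathsf T}S$ (that $S$ is exactly the permutation matrix of $\lambda^{2}$, and that $JS=J$) is the part most in need of attention.
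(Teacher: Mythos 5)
Your proof is correct. Note first that the paper does not actually prove this theorem: its ``proof'' is a pointer to \cite{duality}*{Proposition~5.4} (with the polarity case credited to Baer), so you have supplied a genuine self-contained argument where the paper defers to the literature. Your argument is the standard Benson/Baer-type eigenvalue computation and, as far as I can tell, matches the spirit of the cited proof. The details check out: with the convention $S_{a,b}=1$ iff $a=\lambda^2(b)$ one indeed gets $(M^{\mathsf T}S)_{p,p'}=M_{\lambda^2(p'),p}=M_{p,p'}$, hence $M=M^{\mathsf T}S$ and $M^2=(MM^{\mathsf T})S=qS+J$; the identity $MM^{\mathsf T}=qI+J$ is the usual two-points-one-line count (using that $\lambda|_P$ is a bijection onto $L$); and $JS=J$ because $S$ is a permutation matrix. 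Two small points worth making explicit in a final write-up: (a) the row sums of $M$ also equal $q+1$ (since $M_{p,p'}=1$ iff $p'\in\lambda^{-2}(\lambda(p))$, a set of size $q+1$), which is what makes $\mathbf{1}^{\perp}$ an $M$-invariant complement of $\mathbf{C}\mathbf{1}$ and justifies $\mathrm{tr}(M)=(q+1)+\mathrm{tr}\bigl(M|_{\mathbf{1}^{\perp}}\bigr)$; and (b) no diagonalizability is needed --- every root $\mu$ of the characteristic polynomial of $M|_{\mathbf{1}^{\perp}}$ admits an eigenvector, to which $M^2=qS$ applies, so \emph{every} such eigenvalue (with multiplicity) has the form $\sqrt{q}\,\omega$ with $\omega$ a root of unity, and the trace is their sum. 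The closing arithmetic is airtight: $\Sigma^2=(q+1)^2/q=q+2+1/q$ is a rational non-integer for $q\ge 2$, contradicting that $\Sigma^2$ is an algebraic integer; and $q\ge 2$ holds for any (nondegenerate) projective plane.
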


\begin{proof}
See~\cite{duality}*{Proposition~5.4}. The case of polarities (i.e. correlations which are involutions) goes back to \cite{Baer}.
\end{proof}

\begin{corollary}\label{corollary:never}
Let $\lambda \colon P \to L, \ L \to P$ be a correlation in a finite projective plane $\Pi$. Then there is no triangle presentation compatible with $\lambda$.
\end{corollary}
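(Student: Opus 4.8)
The plan is to read the obstruction off the score formula and then invoke the absolute-point theorem. By Lemma~\ref{lemma:maxscore2}, a triangle presentation compatible with $\lambda$ exists if and only if $S(\lambda) = (q+1)(q^2+q+1)$, and by Proposition~\ref{proposition:formula} this equality is equivalent to $(2q-3)\,a(\lambda) + b(\lambda) = 0$. Since the order of a projective plane satisfies $q \geq 2$, we have $2q-3 \geq 1 > 0$, and both $a(\lambda)$ and $b(\lambda)$ are nonnegative; hence the equality holds if and only if $a(\lambda) = 0$, i.e. if and only if there is no point $p \in P$ with $p \in \lambda^3(p)$. So the whole statement reduces to showing that such a point always exists.

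First I would observe that $\lambda^3$ is itself a correlation of $\Pi$. Indeed, the composite of two correlations is a collineation and the composite of a correlation with a collineation is again a correlation, so every odd power of $\lambda$ is a correlation while every even power is a collineation; in particular $\lambda^3 \colon P \to L, \ L \to P$ is a correlation. This is the key point on which everything hinges, and I expect it to be the only conceptually nontrivial step — the rest is bookkeeping with the formula, since one must make sure Theorem~\ref{theorem:absolutepoints} is genuinely applied to $\lambda^3$ rather than to $\lambda$ itself.

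With this in hand, the conclusion is immediate: applying Theorem~\ref{theorem:absolutepoints} to the correlation $\lambda^3$ yields a point $p \in P$ with $p \in \lambda^3(p)$, so that $a(\lambda) \geq 1$. Plugging into Proposition~\ref{proposition:formula} and using $a(\lambda) \geq 1$ together with $b(\lambda) \geq 0$ gives
\[S(\lambda) = (q+1)(q^2+q+1) - (2q-3)\,a(\lambda) - b(\lambda) \leq (q+1)(q^2+q+1) - (2q-3) < (q+1)(q^2+q+1),\]
where the final strict inequality uses $2q-3 \geq 1 > 0$. Lemma~\ref{lemma:maxscore2} then shows that no triangle presentation is compatible with $\lambda$, which is exactly the assertion of Corollary~\ref{corollary:never}. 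Once the correlation property of $\lambda^3$ is established, no further verification of incidence relations is required.
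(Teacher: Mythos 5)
Your proposal is correct and follows essentially the same route as the paper: apply Theorem~\ref{theorem:absolutepoints} to the correlation $\lambda^3$ to get $a(\lambda) > 0$, then conclude via Proposition~\ref{proposition:formula} and Lemma~\ref{lemma:maxscore2}. The only difference is that you spell out why $\lambda^3$ is again a correlation and why $2q-3>0$, details the paper leaves implicit.
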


\begin{proof}
Applying Theorem~\ref{theorem:absolutepoints} to the correlation $\lambda^3$, we get $a(\lambda) > 0$ and hence $S(\lambda) < (q+1)(q^2+q+1)$ by Proposition~\ref{proposition:formula}. The conclusion then follows from Lemma~\ref{lemma:maxscore2}.
\end{proof}

\begin{remark}
In the semifield plane of order $16$ and with kernel $\mathrm{GF}(4)$, we could observe a correlation $\lambda$ such that $a(\lambda) = b(\lambda) = 1$. This means that there is exactly one point $p$ of the plane such that $p \in \lambda^3(p)$. The score of this correlation $\lambda$ is thus $S(\lambda) = 4611$, the maximal theoretical score being $(16+1)(16^2+16+1) = 4641$.
\end{remark}

\subsection{Estimated score for a general point-line correspondence}

It does not seem possible to get a general formula for the score of all point-line correspondences. One can however obtain (good) lower bounds for the score, simply by trying to cover the most possible edges of $G_\lambda$ with triangles. There are different algorithms that could be used. Our principal goal being to know whether $E(G_\lambda)$ admits a partition into triangles, we should design an algorithm that will find such a partition when it exists. The idea is simple: if an edge of $G_\lambda$ is not yet covered and if there is only one triangle containing this edge and disjoint from the already chosen ones, then this triangle must be part of the (possible) partition. Our algorithm to cover as many edges as we can in $G_\lambda$ is thus the following:
\begin{enumerate}
\item[] While there exists $e \in E(G_\lambda)$ such that there is a unique triangle $t$ in $G_\lambda$ containing $e$, choose this triangle $t$, remove the edge(s) of $t$ from $G_\lambda$ and start again this procedure. If, at the end, there is no more triangles in $G_\lambda$, then we say that the score-algorithm \textbf{succeeds} and that the \textbf{estimated score} $s(\lambda)$ of $\lambda$ is the number of edges that are covered by the chosen triangles. Otherwise, there still are triangles in $G_\lambda$ but all edges are contained in $0$ or at least $2$ triangles. In this case, we say that the score-algorithm \textbf{fails}. For a pseudo-code, see~Algorithm~\ref{algorithm:score}.
\end{enumerate}

One should note that the value of $s(\lambda)$ (and whether the score-algorithm succeeds or not) may depend on the choice made for $e \in E(G_\lambda)$ at each step. We will still talk about \textit{the} estimated score $s(\lambda)$ of $\lambda$, assuming that an order is fixed once and for all on the set $E(G_\lambda)$ for each $\lambda$.

\begin{algorithm}[t!]
$\textit{score} \gets 0$\;
$\textit{edgesInOneTriangle} \gets true$\;
 \While{$\textit{edgesInOneTriangle} = true$}{
  $\textit{edgesInOneTriangle} \gets false$\;
  \For{$e$ in $E(G_\lambda)$}{
   \If{$e$ is contained in exactly one triangle $t$ of $G_\lambda$}{
    $\textit{edgesInOneTriangle} \gets true$\;
    remove the edge(s) of $t$ from $E(G_\lambda)$\;
    \eIf{$t$ is a loop}{
     $\textit{score} \gets \textit{score}+1$\;
    }{
     $\textit{score} \gets \textit{score}+3$\;
    }
   }
  }  
 }
 \eIf{there still are triangles in $G_\lambda$}{
   \Return{FAIL}
   }{
   \Return{score}
  }
 \caption{Computing the estimated score $s(\lambda)$ of $\lambda$}\label{algorithm:score}
\end{algorithm}

\begin{lemma}
Let $\lambda \colon P \to L$ be a point-line correspondence in a projective plane $\Pi$ of order~$q$. Assume that the score-algorithm succeeds. Then $s(\lambda) \leq S(\lambda)$ and, if $S(\lambda) = (q+1)(q^2+q+1)$, then $s(\lambda) = (q+1)(q^2+q+1)$.
\end{lemma}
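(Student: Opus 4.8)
The plan is to treat the two assertions separately; the inequality is almost immediate, while the implication needs a short inductive argument about what the greedy choices are forced to be.

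For $s(\lambda) \le S(\lambda)$, I would first note that the triangles selected by the score-algorithm are pairwise edge-disjoint: each time a triangle $t$ is chosen, all of its edges are deleted from $E(G_\lambda)$, so no later triangle can reuse any of them. Hence, when the algorithm succeeds, the selected triangles form a set of disjoint triangles covering exactly $s(\lambda)$ edges. Since $S(\lambda)$ is by definition the maximal number of edges that can be covered with disjoint triangles, this collection witnesses $s(\lambda) \le S(\lambda)$ with no further work.

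For the second assertion, suppose $S(\lambda) = (q+1)(q^2+q+1)$. By Lemma~\ref{lemma:maxscore2} there is a triangle presentation compatible with $\lambda$, and by Lemma~\ref{lemma:partition} this is exactly a partition of the full edge set $E(G_\lambda)$ into triangles. The key claim I would establish is the following invariant: \emph{at every stage of the algorithm, the edges still present in $G_\lambda$ admit a partition into triangles.} The base case is precisely the partition just produced. For the inductive step, assume the current edge set $E'$ admits a partition $\mathcal{P}'$ into triangles and the algorithm picks an edge $e \in E'$ lying in a unique triangle $t$ of the current graph. In $\mathcal{P}'$, the edge $e$ is covered by some triangle $t'$, all of whose edges lie in $E'$; since $t$ is the \emph{only} triangle on $e$ in the current graph, we get $t' = t$, so $t \in \mathcal{P}'$. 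Deleting the edges of $t$ then leaves $E' \setminus t$ partitioned by $\mathcal{P}' \setminus \{t\}$, re-establishing the invariant. This forcing step is the one place where care is needed, and I expect it to be the only real obstacle: one must argue that an edge contained in a unique triangle of the current graph forces that triangle to appear in \emph{every} triangle partition of the remaining edges, so that a greedy choice can never destroy the existence of a partition.

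Finally, I would combine the invariant with the success hypothesis. When the algorithm halts successfully there are no triangles left in $G_\lambda$; but by the invariant the remaining edges partition into triangles, so a nonempty remainder would contain a triangle, contradicting success. Thus no edge remains, and the selected triangles cover all of $E(G_\lambda)$. Since each of the $q^2+q+1$ points $x$ contributes the $q+1$ edges $(x,y)$ with $y \in \lambda(x)$, we have $|E(G_\lambda)| = (q+1)(q^2+q+1)$, and therefore $s(\lambda) = (q+1)(q^2+q+1)$, as claimed.
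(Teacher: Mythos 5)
Your proof is correct and follows essentially the same route as the paper, which disposes of the lemma by appealing to the discussion preceding Algorithm~\ref{algorithm:score} (disjointness of the chosen triangles gives $s(\lambda)\leq S(\lambda)$, and an edge lying in a unique triangle of the current graph forces that triangle to belong to any partition of the remaining edges). Your invariant formulation merely writes out that forcing argument in full detail.
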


\begin{proof}
This follows from the discussion in the description of the algorithm.
\end{proof}

When the score-algorithm fails, it cannot conclude whether there exists a partition of $E(G_\lambda)$ into triangles. Actually, we never encountered a point-line correspondence for which the algorithm fails for the Hughes plane of order~$9$. We therefore did not need to treat this particular case. Note however that, for a Desarguesian plane, we are aware of some point-line correspondences for which the algorithm fails and which indeed admit a triangle presentation, so this case should not in general be forgotten.

\subsection[Scores in the Hughes plane of order 9]{Scores in the Hughes plane of order~$9$}\label{subsection:Hughes}

By Corollary~\ref{corollary:never}, we know that a correlation never reaches the score of $(q+1)(q^2+q+1)$. A naive approach to find a point-line correspondence of the Hughes plane of order~$9$ with a score of $(9+1)(9^2+9+1) = 910$ is to simply evaluate $s(\lambda)$ for a lot of random correspondences $\lambda$ and to cross one's fingers. This idea is however not successful at all. Indeed, we computed the estimated score of $100000$ random point-line correspondences and got, on average, an estimated score of $486.6$ (with a standard deviation of $17.3$). The best estimated score we could observe was only $561$, very far from $910$.

Compared with these pretty low values, the formula given by Proposition~\ref{proposition:formula} for correlations seems to give better scores. In the Hughes plane of order~$9$, there are $33696$ correlations. Their scores, computed thanks to Proposition~\ref{proposition:formula}, are given in Table~\ref{table:scores}. Note that, as soon as two correlations $\lambda$ and $\lambda'$ are \textit{conjugate} (in the sense that $\lambda = \alpha \lambda' \alpha^{-1}$ for some automorphism $\alpha$ of the plane), we have $a(\lambda) = a(\lambda')$, $b(\lambda) = b(\lambda')$ and $S(\lambda) = S(\lambda')$. (Actually, $G_\lambda$ and $G_{\lambda'}$ are isomorphic.)

\begin{table}[t!]
\centering
\begin{tabular}{|c|c|c|c|c|}
\hline
$\#$ of concerned $\lambda$ & $a(\lambda)$ & $b(\lambda)$ & $S(\lambda)$ & $s(\lambda)$ (mean)\\
\hline
6318 & 4 & 4 & 846 & 846.00\\
4212 & 10 & 2 & 758 & 757.97\\
6318 & 10 & 10 & 750 & 750.00\\
4212 & 16 & 0 & 670 & 669.92\\
6318 & 16 & 16 & 654 & 654.00\\
6318 & 22 & 22 & 558 & 558.00\\
\hline
\end{tabular}
\caption{Scores of the correlations of the Hughes plane of order~$9$.}
\label{table:scores}
\end{table}

We also computed the estimated scores of all these correlations: they are also given in Table~\ref{table:scores}. They show that, at least for correlations, the estimated score is almost always equal to the real score. As expected, correlations have higher estimated scores than random point-line correspondences: they reach $846$. This fact will be helpful for our final strategy to find a correspondence with score $910$, described in the next subsection.

\subsection{Improving a point-line correspondence}

In order to find a point-line correspondence with a score greater than what we already obtained, it is natural to try to slightly modify a point-line correspondence with a high score. The smallest change we can make is to swap the images of two points. The next lemma shows that the score function is somewhat continuous.

\begin{lemma}\label{lemma:continuous}
Let $\lambda \colon P \to L$ be a point-line correspondence in a projective plane $\Pi$ of order~$q$ and let $a, b \in P$. Define $\lambda_{a,b} \colon P \to L$ by $\lambda_{a,b}(x) := \lambda(x)$ for all $x \in P \setminus \{a,b\}$, $\lambda_{a,b}(a) := \lambda(b)$ and $\lambda_{a,b}(b) := \lambda(a)$. Then $|S(\lambda_{a,b}) - S(\lambda)| \leq 6(q+1)$.
\end{lemma}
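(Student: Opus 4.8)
The plan is to pass to the graph-theoretic reformulation of the score and to control how the associated graph changes under the swap. Recall that $S(\lambda)$ equals the maximal number of edges of $G_\lambda$ that can be covered by pairwise disjoint triangles. The crucial first observation is that passing from $\lambda$ to $\lambda_{a,b}$ alters $G_\lambda$ only locally: since $\lambda_{a,b}(x) = \lambda(x)$ for every $x \in P \setminus \{a,b\}$, the out-edges of every vertex other than $a$ and $b$ are literally the same in $G_\lambda$ and in $G_{\lambda_{a,b}}$. Only the out-edges of $a$ and of $b$ change, and each of $a, b$ has exactly $q+1$ out-edges (its neighbours being the $q+1$ points of the relevant line). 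Hence the edge sets $E(G_\lambda)$ and $E(G_{\lambda_{a,b}})$ can differ only within a set of at most $2(q+1)$ edges, namely the out-edges of $a$ and $b$.

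Next I would prove the inequality $S(\lambda_{a,b}) \geq S(\lambda) - 6(q+1)$ by repairing an optimal covering. Fix a family $\mathcal{S}$ of pairwise disjoint triangles of $G_\lambda$ covering $S(\lambda)$ edges. Call a triangle of $\mathcal{S}$ \emph{bad} if it uses an out-edge of $a$ or of $b$; every other triangle uses only edges belonging to both graphs, and hence remains a valid triangle of $G_{\lambda_{a,b}}$ (being a triangle is a purely combinatorial condition on its three edges viewed as ordered pairs, and those pairs are unchanged). Since the triangles of $\mathcal{S}$ are pairwise edge-disjoint, each of the at most $2(q+1)$ altered edges lies in at most one of them, so there are at most $2(q+1)$ bad triangles. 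Discarding all of them removes at most $3 \cdot 2(q+1) = 6(q+1)$ covered edges, and the surviving triangles constitute a family of pairwise disjoint triangles in $G_{\lambda_{a,b}}$. Therefore $S(\lambda_{a,b}) \geq S(\lambda) - 6(q+1)$.

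Finally I would invoke symmetry. The swap is an involution, $(\lambda_{a,b})_{a,b} = \lambda$, so applying the inequality just obtained with the roles of $\lambda$ and $\lambda_{a,b}$ interchanged gives $S(\lambda) \geq S(\lambda_{a,b}) - 6(q+1)$. Combining the two bounds yields $|S(\lambda_{a,b}) - S(\lambda)| \leq 6(q+1)$, as claimed.

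The argument is short once this viewpoint is adopted, so the only real care needed is in the bookkeeping that produces precisely the constant $6(q+1)$: one must verify that at most $2(q+1)$ edges are disturbed and that each bad triangle carries at most $3$ edges. The two points I would double-check are that a triangle surviving the deletion is genuinely a triangle of $G_{\lambda_{a,b}}$ (immediate, since its three edges are common to both graphs) and that the involutive nature of the swap legitimately supplies the reverse inequality without extra work.
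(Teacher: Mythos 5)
Your proof is correct and follows essentially the same route as the paper: both arguments note that only the $2(q+1)$ out-edges of $a$ and $b$ are disturbed and that each triangle carries at most $3$ edges. The paper states this in three lines and "directly deduces" the bound; you have merely made explicit the repair of an optimal covering and the use of the involutive symmetry of the swap, which is exactly the intended justification.
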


\begin{proof}
The graph $G_{\lambda_{a,b}}$ can be obtained from $G_{\lambda}$ by deleting the edges having $a$ or $b$ as origin and replacing them by other edges. In total, $2(q+1)$ edges are deleted and $2(q+1)$ edges are added. Since a triangle contains at most $3$ edges, we directly deduce that $|S(\lambda_{a,b})-S(\lambda)| \leq 6(q+1)$.
\end{proof}

In Lemma~\ref{lemma:continuous}, it is even reasonable to think that $|S(\lambda_{a,b})-S(\lambda)|$ will often be much smaller than $6(q+1)$. In other words, the score should not vary too much when replacing $\lambda$ by $\lambda_{a,b}$, and we can in general hope to have $S(\lambda_{a,b}) > S(\lambda)$ for some $a, b \in P$.

\begin{algorithm}[t!]
$\lambda \gets $ some correlation of the Hughes plane\;
 \While{$s(\lambda) < 910$}{
  $\textit{visited[}\lambda\textit{]} \gets \text{true}$\;
  $\textit{bestA} \gets -1$; $\textit{bestB} \gets -1$\;
  $\textit{bestScore} \gets -1$\;
  \For{$a$ in $P$ \textbf{and} $b$ in $P$}{
   \If{$\textit{visited[}\lambda_{a,b}\textit{]} = \text{false}$ \textbf{and} $s(\lambda_{a,b}) > \textit{bestScore}$}{
    $\textit{bestScore} \gets s(\lambda_{a,b})$\;
    $\textit{bestA} \gets a$\;
    $\textit{bestB} \gets b$\;
   }
  }
  $\lambda \gets \lambda_{\textit{bestA}, \textit{bestB}}$\;
 }
 \Return{$\lambda$}\;
 \caption{Finding a point-line correspondence $\lambda$ with $s(\lambda) = 910$}\label{algorithm:910}
\end{algorithm}

Based on this observation, our idea is simple. Start with a correlation $\lambda$, whose score is known to be higher than for a random correspondence (see \S\ref{subsection:Hughes}). For all distinct $a, b \in P$, consider $\lambda_{a,b}$ (as defined above) and compute its estimated score $s(\lambda_{a,b})$. Then choose $\tilde{a}, \tilde{b} \in P$ such that $s(\lambda_{\tilde{a},\tilde{b}}) = \max \{ s(\lambda_{a,b}) \mid a, b \in P\}$. Now replace $\lambda$ by $\lambda_{\tilde{a},\tilde{b}}$ and start this procedure again! We just need to keep track of the correspondences we already tried so as to avoid being blocked in a local maximum of the score function. This idea is explained in Algorithm~\ref{algorithm:910}. If after some time the algorithm does not seem able to produce a score of $910$, then we stop it and start it again from another correlation.

This procedure is pretty slow: with our implementation, one step (i.e. computing $s(\lambda_{a,b})$ for all $a, b \in P$ so as to find $\tilde{a}$ and $\tilde{b}$) takes ${\sim} 1.25$ seconds. For this reason and because we could still not reach $910$, we have decided not to try all possible pairs $a,b \in P$. Instead, we can observe which points seem to be the \textit{worst}, where the \textbf{badness} of $p \in P$ is the number of edges containing $p$ in $G_\lambda$ which were not covered by a triangle in Algorithm~\ref{algorithm:score}. Then, it is natural to only try the pairs $a,b \in P$ where $a$ is one of the worst points (for instance the $5$ worst points) and $b$ is arbitrary. Obviously, with this change Algorithm~\ref{algorithm:910} does not visit the same correspondences as before, but it has the advantage that a step only takes ${\sim}0.13$ seconds.

After three weeks of slight changes in the algorithm (e.g. the definition of a \textit{bad} point, the number of worst points we consider, the condition under which we stop and start with another correlation, etc), the computer eventually shouted (at least wrote) victory. The starting correlation had a score equal to $750$, and the evolution of the estimated score until $910$ is shown in Figure~\ref{figure:graph910}. 

\begin{remark}
The last change we made to the algorithm before it could solve the problem was actually mistaken! Whereas we wanted to speed up the computation of the five worst points, we made an error in the implementation of that idea resulting in the fact that the five computed points were actually not the worst ones. This mistake still led us to the discovery of a (valid) point-line correspondence $\lambda$ with a score of $910$. The funny part of the story is that if we correct this implementation error and start the algorithm with the same correlation, then it misses the correspondence $\lambda$.
\end{remark}

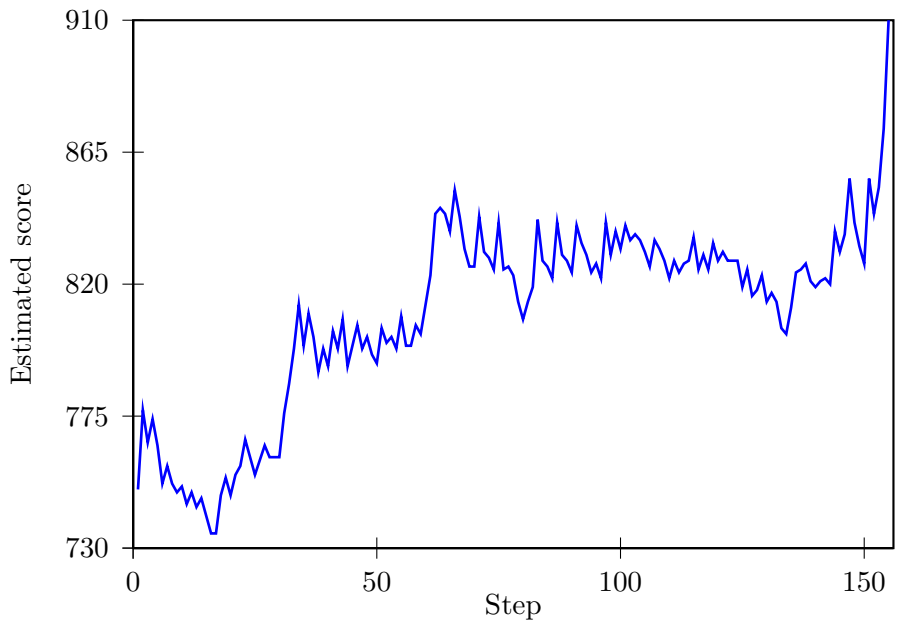
\begin{figure}[t!]
\centering
\begin{pspicture}(0,8.2)
\readdata[ignoreLines=1]{\mydata}{data.dat} 
\psset{xAxisLabel=Step,yAxisLabel=Estimated score,xAxisLabelPos={c,-20},yAxisLabelPos={-23,c}}
\psset{llx=-1cm,lly=-1cm}
\begin{psgraph}[axesstyle=frame,Dy=45,Dx=50,Ox=0,Oy=730](0,730)(0,730)(156,910){10cm}{7cm}
\listplot[linecolor=blue, linewidth=1pt,showpoints=false]{\mydata}
\end{psgraph}
\end{pspicture}
\caption{Evolution of the estimated score with Algorithm~\ref{algorithm:910}.}\label{figure:graph910}
\end{figure}

\section{The building and its lattice}\label{section:result}

In this section, we first give the description of the building and the lattice that we discovered (see Section~\ref{section:strategy} for the methods we used). We then give various properties of these objects (i.e. we prove (4), (5), (6) and (7) in Theorem~\ref{maintheorem}).

\subsection{Description}

The structure of the Hughes plane of order~$9$ is given in Table~\ref{table:incidence1} and comes from~\cite{projectiveplanes}. Points and lines are numbered from $0$ to $90$ (let us call them $p_0,\ldots,p_{90}$ and $\ell_0,\ldots,\ell_{90}$), and the $n^\text{th}$ row (with $0 \leq n \leq 90$) gives the indices of the $10$ points incident to $\ell_n$. The reader may be skeptical that the structure of incidence $\Pi$ defined by these point-line incidences is indeed the Hughes plane, but this is not so hard to verify by analyzing its properties. It is at least really easy to implement a program checking that $\Pi$ satisfies the axioms of a projective plane. Moreover, we can see by hand that $\Pi$ is not Desarguesian. For instance, consider the triangle $T_1$ with points $\{p_1,p_{10},p_{34}\}$ and the triangle $T_2$ with points $\{p_2,p_{11},p_{35}\}$. The lines of $T_1$ are $\{\ell_{10},\ell_{20},\ell_{11}\}$ and the lines of $T_2$ are $\{\ell_{27},\ell_{35},\ell_{19}\}$. Now the line passing through the points $\{p_1,p_2\}$ (resp. $\{p_{10},p_{11}\}$ and $\{p_{34},p_{35}\}$) is $\ell_0$ (resp. $\ell_1$ and $\ell_2$), and these three lines intersect in $p_0$. On the other hand, the common point of the lines $\{\ell_{10},\ell_{27}\}$ (resp. $\{\ell_{20},\ell_{35}\}$ and $\{\ell_{11},\ell_{19}\}$) is $p_{19}$ (resp. $p_{66}$ and $p_{42}$), and these three points do not lie on a common line (the line through $\{p_{19},p_{66}\}$ is $\ell_{48}$, which does not contain $p_{42}$). This shows that Desargues' theorem is not satisfied in $\Pi$. Moreover, our computations show that $\Pi$ is self-dual (since there are correlations), so it can only be the Hughes plane (see, for instance, \cite{Lam}).

Relative to this numbering of points and lines, the point-line correspondence $\lambda \colon P \to L$ which we have found is given in Table~\ref{table:lambda}. For the image of $p_{10x+y}$ by $\lambda$, one should look at the intersection of rows $x\_$ and $\_y$. The triangle presentation $\mathcal{T}$ compatible with $\lambda$ is then given in Table~\ref{table:triangle1}. In this table, the appearance of $(x,y,z)$ means that $(x,y,z)$, $(y,z,x)$ and $(z,x,y)$ all belong to $\mathcal{T}$. There are, in Table~\ref{table:triangle1}, $298$ triples $(x,y,z)$ with $x,y,z$ not all equal and $16$ triples $(x,x,x)$, which means that $\mathcal{T}$ contains $298 \cdot 3 + 16 = 910$ elements as required. While a computer helped to find $\mathcal{T}$, it can once again be checked by hand (or with a trivial program)  that $\mathcal{T}$ is indeed a triangle presentation compatible with $\lambda$. Indeed, one only needs to check that for each $(x,y,z) \in \mathcal{T}$, the line $\lambda(x)$ contains $y$ and there exists no $z' \neq z$ such that $(x,y,z') \in \mathcal{T}$. This suffices to show that $\mathcal{T}$ is a triangle presentation compatible with $\lambda$, since $|\mathcal{T}| = 910$.

It follows from Theorem~\ref{theorem:reconstruct} that the building $\Delta_\mathcal{T}$ and the group $\Gamma_{\mathcal{T}} \leq \Aut(\Delta_\mathcal{T})$ satisfy (1), (2) and (3) in Theorem~\ref{maintheorem}. In the next four subsections we prove (4), (5), (6) and~(7).

\subsection{Torsion in \texorpdfstring{$\Gamma_{\mathcal{T}}$}{Gamma}}

The group $\Gamma_{\mathcal{T}}$ has elements of order~$3$: when $(x,x,x) \in \mathcal{T}$ for some $x \in P$, we have the relation $a_x^3 = 1$ in the presentation of $\Gamma_{\mathcal{T}}$. However, the subgroup $\Gamma_{\mathcal{T}}^0$ of $\Gamma_{\mathcal{T}}$ consisting of the type-preserving automorphisms is torsion-free. Indeed, let $\gamma$ be a torsion element of $\Gamma_{\mathcal{T}}^0$, say of order~$n$. If $v_0$ is a fixed vertex of $\Delta_{\mathcal{T}}$, then $\gamma$ stabilizes the set $\{v_0, \gamma(v_0), \gamma^2(v_0), \ldots, \gamma^{n-1}(v_0)\}$. By \cite{BT}*{Proposition~3.2.4}, $\gamma$ must fix a point of $\Delta_{\mathcal{T}}$, i.e. it stabilizes a simplex of $\Delta_{\mathcal{T}}$. Since $\gamma$ preserves the types, it fixes this simplex pointwise and thus fixes its vertices. But $\Gamma_{\mathcal{T}}^0$ acts freely on the set of vertices of $\Delta_{\mathcal{T}}$, so $\gamma$ must be the identity element.

\subsection{A perfect subgroup of \texorpdfstring{$\Gamma_{\mathcal{T}}$}{Gamma}}

Clearly, $\Gamma_{\mathcal{T}}^0$ is a normal subgroup of index $3$ of $\Gamma_{\mathcal{T}}$. We find that $\Gamma_{\mathcal{T}}$ also has a subgroup of index $2$. Indeed, if we define $A \subset P$ by $A = \ell_{3} \cup \ell_{11} \cup \ell_{62} \cup \ell_{64} \cup \ell_{87}$, then one can check that for each $(x,y,z) \in \mathcal{T}$, either one or three of the points $x, y, z$ belong to $A$. Equivalently, either none or two of the points $x, y, z$ belong to $P \setminus A$. Hence, there is a well-defined group homomorphism $f \colon \Gamma_{\mathcal{T}} \to \mathbf{C}_2$ defined on the generators $\{a_x\}_{x \in P}$ by $f(a_x) := 0$ if $x \in A$ and $f(a_x) := 1$ if $x \not \in A$. The kernel $\ker(f)$ of $f$ is then a subgroup of index~$2$ of~$\Gamma_{\mathcal{T}}$.

The intersection $\Gamma_{\mathcal{T}}^0 \cap \ker(f)$ of these two subgroups is thus a normal subgroup of index $6$ of $\Gamma_{\mathcal{T}}$ (with $\bigslant{\Gamma_{\mathcal{T}}}{\Gamma_{\mathcal{T}}^0 \cap \ker(f)} \cong \mathbf{C}_2 \times \mathbf{C}_3$). We checked using the GAP system that $\Gamma_{\mathcal{T}}^0 \cap \ker(f)$ is a perfect group, so that $[\Gamma_{\mathcal{T}}, \Gamma_{\mathcal{T}}] = \Gamma_{\mathcal{T}}^0 \cap \ker(f)$.

\subsection{Partition of \texorpdfstring{$\Delta_{\mathcal{T}}$}{Delta} into sub-buildings}\label{subsection:partition}

A \textbf{Baer subplane} of a projective plane $\Pi$ is a proper projective subplane $\Pi_0$ of $\Pi$ with the property that every point of $\Pi$ is incident to at least one line of $\Pi_0$ and every line of $\Pi$ is incident to at least one point of $\Pi_0$. Let us take for $\Pi$ the Hughes plane of order~$9$. Then $\Pi$ has a (Desarguesian) Baer subplane $\Pi_0$ of order~$3$, which has the property that all automorphisms and all correlations of $\Pi$ preserve $\Pi_0$ (see \cite{Dembowski}*{5.4.1}). With respect to our numbering of the points and lines of the Hughes plane (see Table~\ref{table:incidence1}), the sets of points and lines of $\Pi_0$ are $$P_0 := \{p_n \mid n \in \{9, 17, 20, 33, 38, 42, 43, 46, 47, 56, 59, 64, 70\}\}$$
and $$L_0 := \{\ell_n \mid n \in \{3, 11, 22, 34, 46, 53, 62, 64, 70, 79, 84, 87, 89\}\}$$
(see the red-colored numbers in Table~\ref{table:incidence1}).

What is surprising is that our point-line correspondence $\lambda$ also preserves $\Pi_0$. This is indeed clear from Table~\ref{table:lambda}. Even better, if we call $\lambda_0$ the restriction of $\lambda$ to $P_0$, then the triangle presentation $\mathcal{T}$ can be restricted to a triangle presentation $\mathcal{T}_0$ compatible with $\lambda_0$. In other terms, for each $(x,y,z) \in \mathcal{T}$, if $x \in P_0$ and $y \in P_0$ then $z \in P_0$. This can also be simply observed by inspecting Table~\ref{table:triangle1}. The author does not know any theoretical reason why these properties are true (and whether they must be true for any point-line correspondence admitting a triangle presentation).

This observation has different consequences. First, we have a point-line correspondence $\lambda_0$ in the Desarguesian projective plane $\Pi_0$ of order~$3$, and a triangle presentation $\mathcal{T}_0$ compatible with it. Theorem~\ref{theorem:reconstruct} thus gives an $\tilde{A}_2$-building $\Delta_{\mathcal{T}_0}$ whose projective plane at each vertex is isomorphic to $\Pi_0$ and a group $\Gamma_{\mathcal{T}_0}$ acting simply transitively on $V(\Delta_{\mathcal{T}_0})$. The triangle presentations in the projective plane of order~$3$ have all been given by Cartwright-Mantero-Steger-Zappa in \cite{CMSZ2}, so $\mathcal{T}_0$ must be one of their list. It turns out that $\mathcal{T}_0$ is equivalent (as defined in \cite{CMSZ2}*{Section~2}) to their triangle presentation numbered $14.1$ (see \cite{CMSZ2}*{Appendix~B} ; one such equivalence takes the $p_n$, in the order listed in the definition of $P_0$, to $12, 2, 5, 0, 8, 11, 10, 3, 1, 9, 6, 4$ and $7$, respectively). In particular, this means by \cite{CMSZ2}*{Section~8} that $\Delta_{\mathcal{T}_0}$ is a non-linear building, i.e. is not the building of $\mathrm{PGL}(3,K)$ for some local field $K$.

The group $\Gamma_{\mathcal{T}_0}$ and the building $\Delta_{\mathcal{T}_0}$ also appear as subgroups and sub-buildings of $\Gamma_{\mathcal{T}}$ and $\Delta_{\mathcal{T}}$, respectively. With the notation of \S\ref{subsection:construction}, there is a clear embedding $e \colon \mathcal{C}_{\mathcal{T}_0} \hookrightarrow \mathcal{C}_{\mathcal{T}}$. Now $\Delta_{\mathcal{T}_0}$ and $\Delta_{\mathcal{T}}$ are the universal coverings of $\mathcal{C}_{\mathcal{T}_0}$ and $\mathcal{C}_{\mathcal{T}}$ respectively, so by fixing some vertices $v_0 \in V(\Delta_{\mathcal{T}_0})$ and $v \in V(\Delta_{\mathcal{T}})$ such that $p(v) = e(p_0(v_0))$ (where $p \colon \Delta_{\mathcal{T}} \to \mathcal{C}_{\mathcal{T}}$ and $p_0 \colon \Delta_{\mathcal{T}_0} \to \mathcal{C}_{\mathcal{T}_0}$ are the natural projections), we get an embedding $\tilde{e} \colon \Delta_{\mathcal{T}_0} \hookrightarrow \Delta_{\mathcal{T}}$ with $\tilde{e}(v_0) = v$. We can then see $\Gamma_{\mathcal{T}_0}$ as the subgroup of $\Gamma_{\mathcal{T}}$ such that $\Gamma_{\mathcal{T}_0}(v)$ is exactly the set of vertices of $\tilde{e}(\Delta_{\mathcal{T}_0})$. Moreover, for each $g \in \Gamma_{\mathcal{T}}$ the set $g \Gamma_{\mathcal{T}_0} (v) \subset V(\Delta_{\mathcal{T}})$ is also the $0$-skeleton of a building isomorphic to $\Delta_{\mathcal{T}_0}$. This means that the vertices of $\Delta_{\mathcal{T}}$ are partitioned into sub-buildings isomorphic to $\Delta_{\mathcal{T}_0}$ (where each sub-building corresponds to a left coset of $\Gamma_{\mathcal{T}_0}$ in $\Gamma_{\mathcal{T}}$).

One should note that these sub-buildings isomorphic to $\Delta_{\mathcal{T}_0}$ cover all the vertices of $\Delta_{\mathcal{T}}$, but this is not true for edges and chambers: some edges (and chambers) of $\Delta_{\mathcal{T}}$ do not belong to any of the sub-buildings.

\subsection{Automorphism group of \texorpdfstring{$\Delta_{\mathcal{T}}$}{Delta}}

The automorphism group $\Aut(\Delta_{\mathcal{T}})$ of $\Delta_{\mathcal{T}}$ contains $\Gamma_{\mathcal{T}}$, which acts simply transitively on the vertices of the building. In order to know whether $\Aut(\Delta_{\mathcal{T}})$ is substantially larger than $\Gamma_{\mathcal{T}}$, we should try to see what the stabilizer of a vertex in $\Aut(\Delta_{\mathcal{T}})$ looks like. This can be done by making use of the GAP system. I am very thankful to Tim Steger, who had done the same work for triangle presentations in the projective plane of order~$3$, and who gave me all his source codes and a great deal of advice.

Let $v$ be a vertex of $\Delta_{\mathcal{T}}$. In the next discussion, $X_{\mathcal{T}}$ will denote the sub-building of $\Delta_{\mathcal{T}}$ containing $v$ and isomorphic to $\Delta_{\mathcal{T}_0}$ (see \S\ref{subsection:partition}). We have the following facts.

\begin{enumerate}[(i)]
\item[(i)] Any automorphism of $\Delta_{\mathcal{T}}$ fixing $v$ must preserve the sub-building $X_{\mathcal{T}}$.\\ \textit{Explanation:} For a vertex $x$ contained in $X_{\mathcal{T}}$, there are $2 \cdot 91 = 182$ vertices adjacent to $x$ in $\Delta_{\mathcal{T}}$, and exactly $2 \cdot 13 = 26$ of them belong to $X_{\mathcal{T}}$. Those $26$ vertices are characterized by the fact that, in the local Hughes plane $\Pi$ associated to $x$, they correspond to the $13$ points and $13$ lines of the Baer subplane $\Pi_0$ of $\Pi$. Hence, if $\alpha \in \Aut(\Delta_{\mathcal{T}})$ is such that $\alpha(x) = y$ with $x, y$ belonging to $X_{\mathcal{T}}$, then $\alpha$ must send the $26$ neighbors of $x$ in $X_{\mathcal{T}}$ on the $26$ neighbors of $y$ in $X_{\mathcal{T}}$ because all automorphisms and correlations of $\Pi$ preserve $\Pi_0$. Starting with $x = v$, we obtain step by step that any automorphism of $\Delta_T$ fixing $v$ must stabilize $X_\mathcal{T}$.

\item[(ii)] There are exactly $16$ automorphisms of $X_{\mathcal{T}}$ stabilizing $v$. \\ \textit{Explanation:} This was previously done by Steger with the help of GAP.

\item[(iii)] For each $x \in V(\Delta_{\mathcal{T}})$, the only automorphism of the ball of radius $2$ centered at $x$ which pointwise stabilizes the ball of radius $1$ is the trivial automorphism. \\ \textit{Explanation:} This was proved with GAP.
\end{enumerate}

Point (iii) implies that the pointwise stabilizer of a ball of radius $1$ in $\Aut(\Delta_{\mathcal{T}})$ is trivial, and hence that an automorphism of $\Delta_{\mathcal{T}}$ is completely determined by  its action on the ball of radius $1$ centered at $v$. In particular, the stabilizer of $v$ in $\Aut(\Delta_{\mathcal{T}})$ is finite and $\Aut(\Delta_{\mathcal{T}})$ is discrete (for the topology of pointwise convergence).

\begin{enumerate}
\item[(iv)] There are exactly $6$ automorphisms of $\Pi$ that pointwise stabilize $\Pi_0$. \\ \textit{Explanation:} This can be checked with a computer, but one can also see~\cite{Luneburg}*{Corollary~5} or~\cite{Rosati} for a more theoretical approach.
\end{enumerate}

The four first points imply that there are at most $16 \cdot 6 = 96$ automorphisms of $\Delta_{\mathcal{T}}$ stabilizing $v$. Denote by $G_1$ the set of the $96$ automorphisms of the ball of radius~$1$ centered at $v$ which could maybe be extended to automorphisms of the whole building.

\begin{enumerate}
\item[(v)] Each automorphism in $G_1$ can be extended to an automorphism of the ball of radius $2$ centered at $v$. \\ \textit{Explanation:} This was proved with GAP.
\end{enumerate}

Now denote by $G_2$ the set of these extended automorphisms.

\begin{enumerate}
\item[(vi)] Each automorphism in $G_2$ can be extended to an automorphism of $\Delta_{\mathcal{T}}$. \\ \textit{Explanation:} This could be checked with a clever GAP program written by Steger.
\end{enumerate}

These steps actually gave us the explicit description of the $96$ automorphisms of $\Delta_{\mathcal{T}}$ fixing $v$. Six of them pointwise stabilize the sub-building $X_{\mathcal{T}}$. The file describing these automorphisms is pretty big so we do not append it to this text.

\newpage

\appendix

\section{The Hughes plane of order~9}\label{appendix:Hughes}

\setlength{\tabcolsep}{3.5pt}

\begin{table}[h!]
\footnotesize
\centering{
\begin{tabular}{c|cccccccccc}
0 & 0 & 1 & 2 & 3 & 4 & 5 & 6 & 7 & 8 & 9 \\
1 & 0 & 10 & 11 & 12 & 13 & 14 & 15 & 16 & 17 & 18 \\
2 & 0 & 19 & 34 & 35 & 36 & 37 & 38 & 39 & 40 & 41 \\
\textbx{3} & 0 & \textbx{20} & 27 & \textbx{42} & 55 & \textbx{56} & 57 & 58 & \textbx{59} & 60 \\
4 & 0 & 21 & 33 & 48 & 54 & 61 & 76 & 78 & 89 & 90 \\
5 & 0 & 22 & 30 & 43 & 49 & 63 & 68 & 72 & 79 & 80 \\
6 & 0 & 23 & 28 & 44 & 50 & 69 & 70 & 77 & 81 & 82 \\
7 & 0 & 24 & 29 & 45 & 51 & 64 & 73 & 74 & 83 & 84 \\
8 & 0 & 25 & 31 & 46 & 52 & 62 & 67 & 75 & 85 & 86 \\
9 & 0 & 26 & 32 & 47 & 53 & 65 & 66 & 71 & 87 & 88 \\
10 & 1 & 10 & 19 & 20 & 21 & 22 & 23 & 24 & 25 & 26 \\
\textbx{11} & 1 & 11 & 34 & \textbx{42} & \textbx{43} & 44 & 45 & \textbx{46} & \textbx{47} & 48 \\
12 & 1 & 12 & 28 & 35 & 55 & 61 & 62 & 63 & 64 & 65 \\
13 & 1 & 13 & 31 & 41 & 54 & 56 & 74 & 80 & 82 & 88 \\
14 & 1 & 14 & 33 & 36 & 50 & 58 & 68 & 73 & 85 & 87 \\
15 & 1 & 15 & 29 & 37 & 52 & 59 & 71 & 76 & 79 & 81 \\
16 & 1 & 16 & 27 & 38 & 51 & 66 & 72 & 77 & 86 & 89 \\
17 & 1 & 17 & 32 & 39 & 49 & 57 & 69 & 75 & 78 & 83 \\
18 & 1 & 18 & 30 & 40 & 53 & 60 & 67 & 70 & 84 & 90 \\
19 & 2 & 10 & 35 & 42 & 49 & 50 & 51 & 52 & 53 & 54 \\
20 & 3 & 10 & 29 & 34 & 56 & 61 & 66 & 67 & 68 & 69 \\
21 & 4 & 10 & 31 & 38 & 48 & 57 & 63 & 81 & 84 & 87 \\
\textbx{22} & 5 & 10 & \textbx{33} & 40 & \textbx{47} & \textbx{59} & \textbx{64} & 72 & 75 & 82 \\
23 & 6 & 10 & 28 & 41 & 43 & 58 & 71 & 83 & 86 & 90 \\
24 & 7 & 10 & 27 & 37 & 45 & 65 & 70 & 78 & 80 & 85 \\
25 & 8 & 10 & 30 & 39 & 46 & 55 & 73 & 76 & 77 & 88 \\
26 & 9 & 10 & 32 & 36 & 44 & 60 & 62 & 74 & 79 & 89 \\
27 & 2 & 11 & 19 & 27 & 28 & 29 & 30 & 31 & 32 & 33 \\
28 & 2 & 13 & 21 & 34 & 57 & 62 & 70 & 71 & 72 & 73 \\
29 & 2 & 14 & 22 & 37 & 48 & 60 & 64 & 69 & 86 & 88 \\
30 & 2 & 12 & 24 & 39 & 47 & 58 & 67 & 80 & 81 & 89 \\
31 & 2 & 18 & 20 & 41 & 45 & 61 & 75 & 77 & 79 & 87 \\
32 & 2 & 16 & 26 & 40 & 44 & 56 & 63 & 76 & 83 & 85 \\
33 & 2 & 15 & 25 & 36 & 43 & 55 & 66 & 78 & 82 & 84 \\
\textbx{34} & 2 & \textbx{17} & 23 & \textbx{38} & \textbx{46} & \textbx{59} & 65 & 68 & 74 & 90 \\
35 & 4 & 11 & 22 & 35 & 58 & 66 & 70 & 74 & 75 & 76 \\
36 & 5 & 11 & 21 & 39 & 50 & 56 & 65 & 79 & 84 & 86 \\
37 & 3 & 11 & 26 & 36 & 52 & 57 & 64 & 77 & 80 & 90 \\
38 & 6 & 11 & 20 & 40 & 51 & 62 & 68 & 78 & 81 & 88 \\
39 & 9 & 11 & 23 & 37 & 54 & 55 & 67 & 72 & 83 & 87 \\
40 & 8 & 11 & 24 & 38 & 49 & 60 & 61 & 71 & 82 & 85 \\
41 & 7 & 11 & 25 & 41 & 53 & 59 & 63 & 69 & 73 & 89 \\
42 & 5 & 14 & 19 & 42 & 63 & 67 & 71 & 74 & 77 & 78 \\
43 & 4 & 13 & 19 & 47 & 51 & 55 & 69 & 79 & 85 & 90 \\
44 & 3 & 16 & 19 & 43 & 54 & 60 & 65 & 73 & 75 & 81 \\
45 & 9 & 12 & 19 & 45 & 53 & 57 & 68 & 76 & 82 & 86 \\
\end{tabular}
\hspace{1cm}
\begin{tabular}{c|cccccccccc}
\textbx{46} & 6 & 15 & 19 & \textbx{46} & 49 & \textbx{56} & \textbx{64} & \textbx{70} & 87 & 89 \\
47 & 7 & 17 & 19 & 44 & 52 & 58 & 61 & 72 & 84 & 88 \\
48 & 8 & 18 & 19 & 48 & 50 & 59 & 62 & 66 & 80 & 83 \\
49 & 5 & 18 & 23 & 29 & 35 & 43 & 57 & 85 & 88 & 89 \\
50 & 3 & 13 & 24 & 30 & 35 & 44 & 59 & 78 & 86 & 87 \\
51 & 8 & 14 & 25 & 32 & 35 & 45 & 56 & 72 & 81 & 90 \\
52 & 7 & 15 & 21 & 31 & 35 & 47 & 60 & 68 & 77 & 83 \\
\textbx{53} & \textbx{9} & 16 & \textbx{20} & \textbx{33} & 35 & \textbx{46} & 69 & 71 & 80 & 84 \\
54 & 6 & 17 & 26 & 27 & 35 & 48 & 67 & 73 & 79 & 82 \\
55 & 4 & 14 & 20 & 30 & 34 & 52 & 65 & 82 & 83 & 89 \\
56 & 5 & 17 & 25 & 28 & 34 & 51 & 60 & 76 & 80 & 87 \\
57 & 6 & 12 & 22 & 32 & 34 & 54 & 59 & 77 & 84 & 85 \\
58 & 9 & 15 & 24 & 27 & 34 & 50 & 63 & 75 & 88 & 90 \\
59 & 8 & 16 & 23 & 31 & 34 & 53 & 58 & 64 & 78 & 79 \\
60 & 7 & 18 & 26 & 33 & 34 & 49 & 55 & 74 & 81 & 86 \\
61 & 4 & 15 & 23 & 32 & 40 & 42 & 61 & 73 & 80 & 86 \\
\textbx{62} & 3 & 12 & 25 & \textbx{33} & \textbx{38} & \textbx{42} & \textbx{70} & 79 & 83 & 88 \\
63 & 8 & 13 & 26 & 28 & 37 & 42 & 68 & 75 & 84 & 89 \\
\textbx{64} & \textbx{9} & \textbx{17} & 21 & 30 & 41 & \textbx{42} & \textbx{64} & 66 & 81 & 85 \\
65 & 7 & 16 & 22 & 29 & 39 & 42 & 62 & 82 & 87 & 90 \\
66 & 6 & 18 & 24 & 31 & 36 & 42 & 65 & 69 & 72 & 76 \\
67 & 4 & 12 & 26 & 29 & 41 & 46 & 50 & 60 & 72 & 78 \\
68 & 4 & 16 & 21 & 28 & 36 & 45 & 49 & 59 & 67 & 88 \\
69 & 4 & 18 & 25 & 27 & 39 & 44 & 54 & 64 & 68 & 71 \\
\textbx{70} & 4 & \textbx{17} & 24 & \textbx{33} & 37 & \textbx{43} & 53 & \textbx{56} & 62 & 77 \\
71 & 5 & 13 & 22 & 27 & 36 & 46 & 53 & 61 & 81 & 83 \\
72 & 5 & 12 & 20 & 31 & 37 & 44 & 49 & 66 & 73 & 90 \\
73 & 5 & 15 & 26 & 30 & 38 & 45 & 54 & 58 & 62 & 69 \\
74 & 5 & 16 & 24 & 32 & 41 & 48 & 52 & 55 & 68 & 70 \\
75 & 3 & 14 & 23 & 27 & 41 & 47 & 49 & 62 & 76 & 84 \\
76 & 3 & 18 & 21 & 32 & 37 & 46 & 51 & 58 & 63 & 82 \\
77 & 3 & 17 & 22 & 31 & 40 & 45 & 50 & 55 & 71 & 89 \\
78 & 3 & 15 & 20 & 28 & 39 & 48 & 53 & 72 & 74 & 85 \\
\textbx{79} & 7 & 13 & \textbx{20} & 32 & \textbx{38} & \textbx{43} & 50 & \textbx{64} & 67 & 76 \\
80 & 9 & 13 & 25 & 29 & 40 & 48 & 49 & 58 & 65 & 77 \\
81 & 6 & 13 & 23 & 33 & 39 & 45 & 52 & 60 & 63 & 66 \\
82 & 6 & 14 & 21 & 29 & 38 & 44 & 53 & 55 & 75 & 80 \\
83 & 7 & 14 & 24 & 28 & 40 & 46 & 54 & 57 & 66 & 79 \\
\textbx{84} & \textbx{9} & 14 & 26 & 31 & 39 & \textbx{43} & 51 & \textbx{59} & 61 & \textbx{70} \\
85 & 8 & 12 & 21 & 27 & 40 & 43 & 52 & 69 & 74 & 87 \\
86 & 7 & 12 & 23 & 30 & 36 & 48 & 51 & 56 & 71 & 75 \\
\textbx{87} & \textbx{9} & 18 & 22 & 28 & \textbx{38} & \textbx{47} & 52 & \textbx{56} & 73 & 78 \\
88 & 8 & 15 & 22 & 33 & 41 & 44 & 51 & 57 & 65 & 67 \\
\textbx{89} & 8 & \textbx{17} & \textbx{20} & 29 & 36 & \textbx{47} & 54 & 63 & \textbx{70} & 86 \\
90 & 6 & 16 & 25 & 30 & 37 & 47 & 50 & 57 & 61 & 74 \\
\end{tabular}
}
\caption{Incidence relation of the Hughes plane of order $9$, from \cite{projectiveplanes}.}
\label{table:incidence1}
\end{table}

\newpage

\section{The triangle presentation}
\begin{table}[h!]
\footnotesize
\centering
\begin{tabular}{c|cccccccccc}
$\lambda$ & $\_$0 & $\_$1 & $\_$2 & $\_$3 & $\_$4 & $\_$5 & $\_$6 & $\_$7 & $\_$8 & $\_$9\\
\hline
0$\_$ & 20 & 0 & 44 & 75 & 78 & 77 & 50 & 76 & 37 & \textbx{3} \\
1$\_$ & 54 & 39 & 30 & 8 & 88 & 68 & 18 & \textbx{34} & 65 & 57 \\
2$\_$ & \textbx{70} & 82 & 42 & 23 & 38 & 90 & 81 & 13 & 61 & 69 \\
3$\_$ & 73 & 4 & 83 & \textbx{22} & 58 & 28 & 59 & 55 & \textbx{64} & 60 \\
4$\_$ & 56 & 2 & \textbx{87} & \textbx{84} & 26 & 45 & \textbx{53} & \textbx{11} & 80 & 41 \\
5$\_$ & 25 & 14 & 63 & 72 & 7 & 32 & \textbx{62} & 86 & 51 & \textbx{46} \\
6$\_$ & 36 & 27 & 31 & 29 & \textbx{79} & 33 & 16 & 71 & 85 & 24 \\
7$\_$ & \textbx{89} & 35 & 17 & 19 & 5 & 47 & 67 & 10 & 66 & 43 \\
8$\_$ & 6 & 21 & 1 & 52 & 74 & 40 & 12 & 48 & 9 & 15 \\
9$\_$ & 49\\
\end{tabular}
\caption{Point-line correspondence $\lambda$.}
\label{table:lambda}
\end{table}

\def\arraystretch{0.95}
\setlength{\tabcolsep}{3.5pt}

\begin{table}[h!]
\footnotesize
\begin{tabular}{ccccccccc}
(0,3,41)   &
(0,10,82)   &
(0,29,54)   &
(0,34,9)   &
(0,56,88)   &
(0,61,31)   &
(0,66,1)   &
(0,67,13)   &
(0,68,74)   \\
(0,69,80)   &
(1,1,1)     &
(1,2,16)   &
(1,3,47)   &
(1,4,72)   &
(1,5,89)   &
(1,6,86)   &
(1,7,51)   &
(1,8,77)   \\
(1,9,27)   &
(2,3,62)   &
(2,19,12)   &
(2,43,61)   &
(2,54,73)   &
(2,60,65)   &
(2,65,55)   &
(2,73,35)   &
(2,75,17)   \\
(2,81,63)   &
(3,3,3)   &
(3,14,8)   &
(3,23,6)   &
(3,27,56)   &
(3,49,7)   &
(3,76,4)   &
(3,84,5)   &
(4,15,28)   \\
(4,20,37)   &
(4,28,15)   &
(4,39,81)   &
(4,48,29)   &
(4,53,20)   &
(4,74,79)   &
(4,85,71)   &
(5,17,90)   &
(5,22,67)   \\
(5,31,33)   &
(5,40,60)   &
(5,45,53)   &
(5,50,30)   &
(5,55,40)   &
(5,71,22)   &
(6,13,25)   &
(6,24,78)   &
(6,30,26)   \\
(6,35,21)   &
(6,44,10)   &
(6,59,19)   &
(6,78,24)   &
(6,87,59)   &
(7,18,39)   &
(7,21,75)   &
(7,32,57)   &
(7,37,83)   \\
(7,46,69)   &
(7,58,32)   &
(7,63,64)   &
(7,82,18)   &
(8,11,87)   &
(8,26,52)   &
(8,36,58)   &
(8,52,68)   &
(8,57,36)   \\
(8,64,50)   &
(8,80,70)   &
(8,90,85)   &
\textbx{(9,20,43)}   &
\textbx{(9,42,38)}   &
(9,55,44)   &
\textbx{(9,56,42)}   &
(9,57,48)   &
(9,58,45)   \\
\textbx{(9,59,46)}   &
(9,60,11)   &
(10,17,23)   &
(10,26,33)   &
(10,27,82)   &
(10,35,73)   &
(10,48,77)   &
(10,67,81)   &
(10,73,50)   \\
(10,79,69)   &
(11,11,11)   &
(11,23,71)   &
(11,37,82)   &
(11,54,24)   &
(11,55,85)   &
(11,67,61)   &
(11,72,49)   &
(11,83,47)   \\
(12,12,12)   &
(12,24,68)   &
(12,39,86)   &
(12,47,45)   &
(12,58,56)   &
(12,67,53)   &
(12,80,82)   &
(12,81,57)   &
(12,89,76)   \\
(13,31,48)   &
(13,46,35)   &
(13,52,26)   &
(13,62,79)   &
(13,67,27)   &
(13,75,52)   &
(13,85,82)   &
(13,86,64)   &
(14,15,21)   \\
(14,22,63)   &
(14,33,82)   &
(14,41,37)   &
(14,44,32)   &
(14,51,58)   &
(14,57,51)   &
(14,65,43)   &
(14,67,22)   &
(15,16,30)   \\
(15,36,34)   &
(15,45,82)   &
(15,49,59)   &
(15,59,89)   &
(15,67,83)   &
(15,88,65)   &
(16,18,82)   &
(16,40,25)   &
(16,53,66)   \\
(16,60,84)   &
(16,67,46)   &
(16,70,36)   &
(16,84,55)   &
(16,90,18)   &
\textbx{(17,17,38)}   &
\textbx{(17,46,20)}   &
\textbx{(17,59,70)}   &
(17,65,82)   \\
(17,68,40)   &
(17,74,72)   &
(18,22,42)   &
(18,29,39)   &
(18,42,78)   &
(18,62,87)   &
(18,87,62)   &
(18,90,29)   &
(19,22,77)   \\
(19,32,79)   &
(19,34,75)   &
(19,54,45)   &
(19,59,87)   &
(19,77,22)   &
(19,84,41)   &
(19,85,61)   &
(20,24,62)   &
\textbx{(20,33,64)}   \\
(20,53,37)   &
\textbx{(20,56,70)}   &
(20,62,77)   &
(20,77,24)   &
(21,21,38)   &
(21,29,68)   &
(21,44,60)   &
(21,53,31)   &
(21,55,83)   \\
(21,80,77)   &
(22,71,74)   &
(22,74,63)   &
(22,78,42)   &
(23,28,80)   &
(23,41,36)   &
(23,43,26)   &
(23,58,90)   &
(23,83,77)   \\
(23,86,28)   &
(23,90,57)   &
(24,40,34)   &
(24,51,85)   &
(24,81,84)   &
(24,88,32)   &
(25,25,25)   &
(25,30,58)   &
(25,37,65)   \\
(25,47,48)   &
(25,50,77)   &
(25,57,56)   &
(25,61,29)   &
(25,74,49)   &
(26,39,55)   &
(26,45,76)   &
(26,60,39)   &
(26,63,88)   \\
(26,66,77)   &
(27,31,61)   &
(27,41,34)   &
(27,54,29)   &
(27,74,68)   &
(27,80,69)   &
(27,88,66)   &
(28,32,40)   &
(28,40,80)   \\
(28,42,52)   &
(28,61,32)   &
(28,73,42)   &
(28,86,61)   &
(29,44,89)   &
(29,64,76)   &
(29,71,70)   &
(30,30,38)   &
(30,45,57)   \\
(30,54,74)   &
(30,62,61)   &
(30,69,37)   &
(31,54,83)   &
(31,76,78)   &
(31,78,36)   &
(31,89,81)   &
(31,90,43)   &
(32,46,84)   \\
(32,54,64)   &
(32,66,72)   &
\textbx{(33,33,33)}   &
(33,40,51)   &
\textbx{(33,47,46)}   &
\textbx{(33,59,56)}   &
(33,72,39)   &
(33,75,61)   &
(34,34,34)   \\
(34,50,39)   &
(34,63,37)   &
(34,88,47)   &
(34,90,35)   &
(35,57,71)   &
(35,62,41)   &
(35,70,86)   &
(35,71,58)   &
(35,72,83)   \\
(36,53,44)   &
(36,64,67)   &
(36,78,65)   &
(36,79,51)   &
(37,52,89)   &
(37,89,52)   &
(38,41,41)   &
\textbx{(38,42,56)}   &
\textbx{(38,64,64)}   \\
(38,66,66)   &
(38,81,81)   &
(38,85,85)   &
(39,49,41)   &
(39,74,43)   &
(40,76,41)   &
(40,87,48)   &
\textbx{(42,47,47)}   &
(42,73,52)   \\
\textbx{(43,43,43)}   &
(43,51,68)   &
\textbx{(43,59,64)}   &
\textbx{(43,70,47)}   &
(44,44,44)   &
(44,62,75)   &
(44,74,80)   &
(44,79,47)   &
(45,45,45)   \\
(45,68,69)   &
(45,86,62)   &
\textbx{(46,46,46)}   &
(46,71,76)   &
(46,80,50)   &
(48,48,48)   &
(48,49,63)   &
(48,58,81)   &
(48,65,84)   \\
(49,53,73)   &
(49,69,85)   &
(49,73,53)   &
(49,89,59)   &
(50,55,76)   &
(50,73,51)   &
(50,76,60)   &
(50,88,87)   &
(51,73,54)   \\
(51,87,66)   &
(52,75,84)   &
(52,84,68)   &
(53,90,88)   &
(54,84,70)   &
(55,56,79)   &
(55,63,86)   &
(56,83,60)   &
(57,75,72)   \\
(58,72,75)   &
(60,79,85)   &
(60,86,63)   &
(63,69,70)   &
(65,66,86)   &
(65,78,69)   &
(66,89,71)   &
(68,87,83)   &
(69,78,72)   \\
\textbx{(70,70,70)}   &
(71,75,88)   &
(72,78,76)   &
(79,79,79)   &
(79,90,89)   &
(80,81,87)   &
(83,83,83)   &
(88,88,88)   \\
\end{tabular}
\caption{Triangle presentation $\mathcal{T}$ compatible with $\lambda$.}
\label{table:triangle1}
\end{table}



\end{document}